\newtheorem{theorem}{Theorem}
\newtheorem{lemma}[theorem]{Lemma}
\newcommand{\infec} {A}
\newcommand{\checked} {Z}
\newcommand{\filter}[1]{\mathcal{F}(#1)}
\author{Mihyun ~Kang\thanks{Supported by Austrian Science Fund (FWF): P26826, W1230.} \and Tam\'as ~Makai \thanks{Supported by Austrian Science Fund (FWF): P26826.}
}
\title{Bootstrap percolation on G(n,p) revisited}
\address{
Graz University of Technology, Institute of Discrete Mathematics, Steyrergasse 30, 8010 Graz, Austria\\
\{kang, makai\}@math.tugraz.at
}
\keywords{Random graph, Bootstrap percolation, Martingale}
\begin{document}
\maketitle

\begin{abstract}
Bootstrap percolation on a graph with infection threshold \begin{math}r\in \naturals\end{math} is an infection process, which starts from a set of initially infected vertices and in each step every vertex with at least \begin{math}r\end{math} infected neighbours becomes infected. We consider bootstrap percolation on the binomial random graph \begin{math}G(n,p)\end{math}, which was investigated  among others by Janson, \L uczak, Turova and Valier (2012). We improve their results by strengthening the probability bounds for the number of infected vertices at the end of the process.
\end{abstract}

\section{Introduction}

Bootstrap percolation on a graph with infection threshold \begin{math}r\in \naturals\end{math} is a deterministic infection process which evolves in rounds. In each round every vertex has exactly one of two possible states: it is either infected or uninfected. We denote the set of initially infected vertices by \begin{math}\infec(0)\end{math}. In each round of the process every uninfected vertex \begin{math}v\end{math} becomes infected if it has at least \begin{math}r\end{math} infected neighbours, otherwise it remains uninfected. Once a vertex has become infected, it remains infected forever. The final infected set is denoted by \begin{math}\infec_f\end{math}.

Bootstrap percolation was introduced by Chalupa, Leath, and Reich \cite{bootstrapintr} in the context of
magnetic disordered systems. Since then bootstrap percolation processes (and extensions) have been used to describe several complex phenomena: from neuronal activity \cite{MR2728841,inhbootstrap} to the dynamics of the Ising model at zero temperature \cite{Fontes02stretchedexponential}.

In the context of social networks, bootstrap percolation provides a prototype model for the spread of ideas. In this setting infected vertices represent individuals who have already adopted a new belief and a person adopts a new belief if at least \begin{math}r\end{math} of his acquaintances have already adopted it.

On the \begin{math}d\end{math}-dimensional grid \begin{math}[n]^d\end{math} bootstrap percolation has been studied by 
Balogh, Bollob{\'a}s, Duminil-Copin, and Morris \cite{MR2888224}, when the initial infected set contains every vertex independently with probability \begin{math}p\end{math}. 
For the size of the final infection set they showed the existence of a sharp threshold. More precisely, they established the threshold probability \begin{math}p_\mathrm{c}\end{math}, such that if \begin{math}p\leq (1-\varepsilon )p_\mathrm{c}\end{math}, then the probability that every vertex in \begin{math}[n]^d\end{math} becomes infected tends to 0, as \begin{math}n\rightarrow\infty\end{math}, while if \begin{math}p\geq (1+\varepsilon )p_\mathrm{c}\end{math}, then the probability that every vertex in \begin{math}[n]^d\end{math} becomes infected tends to one, as \begin{math}n\rightarrow\infty\end{math}.

Bootstrap percolation has also been studied for several random graph models. For instance Amini and Fountoulakis \cite{bootpower} considered the Chung-Lu model \cite{MR1955514} where the vertex weights follow a power law degree distribution and the presence of an edge \begin{math}\{u,v\}\end{math} is proportional to the product of the weights of \begin{math}u\end{math} and \begin{math}v\end{math}. Taking into account that in this model a linear fraction of the vertices have degree less than \begin{math}r\end{math} and thus at most a linear fraction of the vertices can become infected, the authors proved the size of the final infected set \begin{math}\infec_f\end{math} exhibits a phase transition. 

Janson,  \L uczak, Turova, and Vallier \cite{MR3025687} analysed bootstrap percolation on the binomial random graph \begin{math}G(n,p)\end{math}, a graph with vertex set \begin{math}[n]:=\{1,2,\ldots, n\}\end{math} where every edge appears independently with probability \begin{math}p=p(n)\end{math}, and the set of initially infected vertices \begin{math}\infec(0)\end{math} is chosen uniformly at random from the vertex sets of size \begin{math}a\end{math}. For \begin{math}r\geq 2\end{math} and \begin{math}p\end{math} satisfying both \begin{math}p=\omega(n^{-1})\end{math} and \begin{math}p= o(n^{-1/r})\end{math}, they showed, among other results, that with probability tending to one as \begin{math}n\rightarrow \infty\end{math} either only a few additional vertices are infected or almost every vertex becomes infected. In addition they determined, depending on the number of initially infected vertices, 
the probability of both of these events up to an additive term tending to zero as \begin{math}n\rightarrow \infty\end{math}.

The main contributions of this paper are threefold. First we strengthen this result by showing exponential tail bounds. Second we introduce a martingale in order to determine the number of infected vertices during the early stages of the process. 
Finally in the supercritical regime we show that the subgraph spanned by the vertices with \begin{math}r-1\end{math} infected neighbours grows large enough to contain a giant component. The infection of just one vertex in this giant component leads to every vertex in the component becoming infected and we show that this in fact happens.

\paragraph{Main Results.}
Throughout the paper we assume that \begin{math}r\geq 2\end{math} and that both \begin{math}p=\omega(n^{-1})\end{math} and \begin{math}p=o ( n^{-1/r})\end{math} hold.
Set 
\begin{equation*}
t_0:=\left(\frac{r!}{np^r}\right)^{1/(r-1)}.
\end{equation*}
Let \begin{math}\hat{\pi}(t)=\mathbb{P}[\mathrm{Bin(t,p)}\geq r]\end{math} and define
\begin{equation*}
a_c:=-\min_{t\leq t_0} \frac{n\hat{\pi}(t)-t}{1-\hat{\pi}(t)}.
\end{equation*}
In addition denote by \begin{math}t_c\end{math} the smallest value \begin{math}t\end{math} where this minimum is reached. Similarly to \cite{MR3025687} it can be shown that 
\begin{equation*}
t_c=(1+o(1))((r-1)!/(np^r))^{1/(r-1)} \quad \mbox{and} \quad
a_c=(1+o(1))(1-1/r)t_c.
\end{equation*}

\begin{theorem}\label{mainsub}

Let \begin{math}\omega_0\end{math} be any function satisfying the conditions \begin{math}\omega_0=\omega(\sqrt{a_c})\end{math} and \begin{math}\omega_0\leq a_c-r\end{math}. \linebreak[4] If  \begin{math}|\infec(0)|=a_c-\omega_0\end{math}, then with probability at least
\begin{equation*}	
1-\exp\left(-\frac{\omega_0^2}{10 t_0}\right)
\end{equation*}
	we have \begin{math}|\infec_f|< t_c\end{math}.
\end{theorem}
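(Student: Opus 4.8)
The plan is to realise the infection process as a vertex-by-vertex exploration and then reduce the claim to a single Chernoff estimate. Write $a:=|\infec(0)|$, and on a probability space carrying independent $\mathrm{Bernoulli}(p)$ variables $D_v^{(j)}$ (for $v\notin\infec(0)$, $j\in[n]$) set
\[
\infected_t \;:=\; a + \bigl|\{\,v\notin\infec(0): D_v^{(1)}+\dots+D_v^{(t)}\ge r\,\}\bigr|.
\]
The standard principle of deferred decisions shows that bootstrap percolation from $\infec(0)$ on $G(n,p)$ may be run as follows: keep a queue initialised with $\infec(0)$, and at step $t$ remove one vertex from it, reveal its edges to the currently uninfected vertices (these form a fresh independent $\mathrm{Bernoulli}(p)$ family, independent of which vertex is being processed), and enqueue every uninfected vertex that thereby acquires its $r$-th revealed infected neighbour. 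Then the number of infected vertices after $t$ steps equals $\infected_t$, the queue has size $\infected_t-t$, the configuration reached when the queue empties is closed, and hence $|\infec_f|=T:=\min\{t\ge1:\infected_t=t\}$. Since for each fixed $t$ the indicators in $\infected_t$ depend on pairwise disjoint families of the $D_v^{(j)}$, we also get $\infected_t-a\sim\mathrm{Bin}(n-a,\hat\pi(t))$. (This exploration, together with its natural filtration, is the martingale apparatus announced for the early-stage analysis; here only this one-step marginal is needed.)

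As $\infected_t$ is non-decreasing, $\infected_t-t$ decreases by at most $1$ per step, and $\infected_0=a\ge r\ge2$; so if $\infected_{t^\ast}\le t^\ast$ for some integer $1\le t^\ast<t_c$, then $T\le t^\ast<t_c$. It thus suffices to find one such $t^\ast$ with the required probability, and I would take $t^\ast:=t_c-1$ (which lies in $[1,t_0]$ for $n$ large, since $t_c\to\infty$ and $t_c<t_0$). Writing $\phi(t):=(t-n\hat\pi(t))/(1-\hat\pi(t))$, so that $a_c=\max_{t\le t_0}\phi(t)=\phi(t_c)$, the identity $t=\phi(t)(1-\hat\pi(t))+n\hat\pi(t)$ gives, with $a=a_c-\omega_0$,
\[
t^\ast-\mathbb{E}[\infected_{t^\ast}] \;=\; \bigl(\omega_0-(a_c-\phi(t^\ast))\bigr)\bigl(1-\hat\pi(t^\ast)\bigr).
\]
The elementary bounds behind the stated asymptotics for $t_c$ and $a_c$ (namely $\hat\pi(t)\le\binom{t}{r}p^r$, $n\hat\pi(t_0)\le t_0$, and $n(\hat\pi(t)-\hat\pi(t-1))=O(1)$ for $t\le t_0$) give $\hat\pi(t^\ast)=o(1)$ and $0\le a_c-\phi(t^\ast)=\phi(t_c)-\phi(t_c-1)=O(1)$, so, as $\omega_0\to\infty$, $t^\ast-\mathbb{E}[\infected_{t^\ast}]=(1-o(1))\,\omega_0>0$.

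Finally, $\{\infected_{t^\ast}>t^\ast\}$ is precisely the event that $\mathrm{Bin}(n-a,\hat\pi(t^\ast))$ exceeds its mean $\mu:=(n-a)\hat\pi(t^\ast)$ by at least $\lambda:=(1-o(1))\,\omega_0$, for which the Chernoff bound $\mathbb{P}[\mathrm{Bin}\ge\mu+\lambda]\le\exp(-\lambda^2/(2\mu+2\lambda/3))$ applies. Here $\mu\le n\hat\pi(t_0)\le t_0$ and $\lambda\le\omega_0\le a_c-r\le(1+o(1))t_0$, so $2\mu+2\lambda/3\le(8/3+o(1))t_0$ and
\[
\mathbb{P}[\infected_{t^\ast}>t^\ast]\;\le\;\exp\!\left(-\frac{(1-o(1))\,\omega_0^2}{(8/3+o(1))\,t_0}\right)\;\le\;\exp\!\left(-\frac{\omega_0^2}{10\,t_0}\right)
\]
for $n$ large, with a comfortable margin; the hypothesis $\omega_0=\omega(\sqrt{a_c})$ is exactly what forces $\omega_0^2/t_0\to\infty$.

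The one genuinely delicate step is the deferred-decisions lemma underpinning $\infected_t-a\sim\mathrm{Bin}(n-a,\hat\pi(t))$ and $|\infec_f|=T$: one must check that processing a queued vertex never re-examines an already exposed edge (so each step contributes a fresh independent family) and that the terminal configuration is closed, so that the exploration infects exactly $\infec_f$. Everything afterwards is the one-line Chernoff estimate above plus the routine asymptotic bookkeeping for $\hat\pi$ near $t_c$.
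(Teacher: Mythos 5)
Your argument is correct, and it reaches the bound by a genuinely lighter route than the paper. The paper first builds the martingale $M(t,i)$ of Section~3 (with the stopped probabilities $\pi(t)$ to handle the case $T<t$), proves the uniform concentration Lemma~\ref{conc} via the Chung--Lu martingale inequality, and then in Section~\ref{subcritical} only reads off the single value $M(t_c,n-a)<\omega_0/2$ to conclude $|\infec(t_c)|<t_c$. You instead observe that the abstractly defined count $\infected_{t}$ built from the i.i.d.\ array $D_v^{(j)}$ has \emph{exact} marginal law $a+\mathrm{Bin}(n-a,\hat\pi(t))$ for each fixed $t$, that the coupling gives $|\infec_f|=T=\min\{t:\infected_t=t\}$, and that since $\infected_t-t$ drops by at most one per step it suffices to exhibit one $t^\ast<t_c$ with $\infected_{t^\ast}\le t^\ast$; a single application of the standard Chernoff bound at $t^\ast=t_c-1$ does the rest. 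This avoids the martingale/optional-stopping apparatus entirely, which is precisely appropriate here: the uniform-in-$(\tau,i)$ control that Lemma~\ref{conc} provides is only actually needed later, in Lemma~\ref{early} of the supercritical argument, where one must track the process across all times up to $t_0$. For the subcritical theorem a one-time marginal is enough, and your constant-chasing ($\mu\le n\hat\pi(t_0)\le t_0$, $\lambda=(1-o(1))\omega_0\le(1+o(1))t_0$, giving exponent $\ge(3/8-o(1))\omega_0^2/t_0$) comfortably beats the stated $\omega_0^2/(10t_0)$. The one step you flag as delicate---that the deferred-decisions exploration realises each $X(t,i)$ as $\mathbf{1}[\sum_{s\le t}D^{(s)}_{a+i}\ge r]$ even for already-examined $a+i$, and hence that $|\infec_f|=T$---does need the observation that once $a+i$ is dequeued its partial sum already exceeds $r$, so the unrevealed (dummy) coordinates cannot change the indicator; but this is exactly the Scalia-Tomba reformulation the paper also relies on, so your proof is sound and self-contained modulo that standard lemma.
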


\begin{theorem}\label{mainsup}
	
	Let \begin{math}\omega_0\end{math} be any function satisfying the conditions \begin{math}\omega_0=\omega(\sqrt{a_c})\end{math} and \begin{math}\omega_0\leq t_0-a_c\end{math}. \linebreak[4] If \begin{math}|\infec(0)|=a_c+\omega_0\end{math}, then with probability at least
\begin{equation*}
	1-\exp\left(-\frac{\omega_0^2}{10 t_0}\right)-\exp\left(-\frac{a_c+\omega_0}{4}\right)
\end{equation*}
	we have \begin{math}|\infec_f|= (1+o(1))n\end{math}.
\end{theorem}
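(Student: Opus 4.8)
The plan is to analyse the process through the usual exploration reformulation, in which infected vertices are revealed one at a time: keep a queue of infected-but-unprocessed vertices, and at each step remove one vertex, expose its edges to the currently uninfected vertices, increment a counter for each such vertex, and enqueue any vertex whose counter reaches $r$. After $t$ steps the number of uninfected vertices with exactly $j<r$ exposed infected neighbours is concentrated around $n\,\mathbb{P}[\mathrm{Bin}(t,p)=j]$, the total number of infected vertices around $|\infec(0)|+n\hat\pi(t)$, and the process survives step $t$ exactly while the queue is non-empty, i.e. while the gap $g(t):=|\infec(0)|+n\hat\pi(t)-t$ stays positive. The constant $a_c$ is calibrated so that on this deterministic trajectory one has $g(t)\ge(1+o(1))\omega_0$ for all $t\le t_0$, with the minimum attained near $t=t_c$, when $|\infec(0)|=a_c+\omega_0$; the hypothesis $\omega_0\le t_0-a_c$ keeps us in the range $|\infec(0)|\le t_0$ where this picture is the relevant one.

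First I would show that the gap stays positive for all $t\le t_0$. Writing $g(t)$ as its deterministic curve plus a martingale, whose increments are $O(1)$ in conditional mean and whose predictable quadratic variation up to time $t_0$ is $\sum_{s\le t_0}O\!\left(np\,\mathbb{P}[\mathrm{Bin}(s,p)=r-1]\right)=O(n\hat\pi(t_0))=O(t_0)$, a Freedman/Bernstein-type martingale inequality gives that this martingale deviates by more than $\omega_0$ somewhere on $[0,t_0]$ with probability at most $\exp(-\omega_0^2/(10t_0))$; since $\omega_0=\omega(\sqrt{a_c})=\omega(\sqrt{t_0})$ this tends to $0$ and is the first error term. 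On the complementary event the process does not die before step $t_0$, and there $g(t_0)=(1+o(1))|\infec(0)|>0$ with $g$ strictly increasing (its discrete derivative at $t_0$ being $\approx r-1\ge1$), so the process has entered a runaway regime in which the infected set grows strictly faster than the processed set.

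Next I would carry the process from $\Theta(t_0)$ infected vertices up to a linear number. Here the approximation is only needed to keep the drift bounded away from $0$, so a coarser bound suffices: a Chernoff estimate at scale $|\infec(0)|=a_c+\omega_0$ shows that with probability at least $1-\exp(-(a_c+\omega_0)/4)$ the process survives until a time $t_1=\Theta(1/p)$. At time $t_1$ we have $t_1p=\Theta(1)$, so the set $\ninfec_{r-1}(t_1)$ of uninfected vertices with exactly $r-1$ exposed infected neighbours has size $\Theta(n)$, and no edge among these vertices has yet been examined; conditioned on their identities the subgraph of $G(n,p)$ they span is a copy of $G(m,p)$ with $m=\Theta(n)$ and $mp\to\infty$, hence contains a component $C$ with $|C|=(1-o(1))m=\Theta(n)$. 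Since infecting any one vertex of $\ninfec_{r-1}(t_1)$ raises one of its $\ninfec_{r-1}$-neighbours to $r$ infected neighbours, infection spreads through every component of this subgraph it reaches; as the runaway process keeps infecting further vertices after $t_1$, and each newly infected vertex lay in the current $(r-1)$-neighbour set, it reaches $C$, so all of $C$ — and thus $\Theta(n)$ vertices — become infected.

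Finally, once $\varepsilon n$ vertices are infected for a fixed $\varepsilon>0$, each vertex independently has $\mathrm{Bin}(\varepsilon n,p)$ infected neighbours with $\varepsilon np\to\infty$, so the expected number of vertices with fewer than $r$ infected neighbours is $n\,\mathbb{P}[\mathrm{Bin}(\varepsilon n,p)<r]=o(n)$; by Markov's inequality all but $o(n)$ vertices reach $r$ infected neighbours and become infected, giving $|\infec_f|=(1+o(1))n$. I expect the main obstacle to be the middle step: once a non-negligible fraction of vertices is infected the edges are no longer fresh, so carrying the queue/gap bookkeeping and the conditional independence needed for the $G(m,p)$ comparison all the way to time $t_1$ — while controlling the conditioning created both by earlier exposures and by the event that the process has survived — is delicate, and it may be cleaner to defer revealing the within-$\ninfec_{r-1}$ edges and to apply the component argument iteratively along a doubling sequence of infection levels rather than once.
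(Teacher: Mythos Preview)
Your first phase --- the martingale argument showing the gap stays positive through step $t_0$ with failure probability $\exp(-\omega_0^2/(10t_0))$ --- is essentially the paper's Lemma~\ref{early}, and is fine.

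The real gap is your middle step. You propose to run the exploration all the way to a time $t_1=\Theta(1/p)$, so that $t_1p=\Theta(1)$ and the set of vertices with exactly $r-1$ exposed infected neighbours has size $\Theta(n)$. But $t_0p=o(1)$, so bridging from $t_0$ to $\Theta(1/p)$ is a long stretch over which you offer only the sentence ``a Chernoff estimate at scale $a_c+\omega_0$ shows \ldots\ the process survives until $t_1=\Theta(1/p)$''. No such estimate is given, and there is no reason the error should come out as $\exp(-(a_c+\omega_0)/4)$; the predictable variance over $[t_0,1/p]$ is of a completely different order than over $[0,t_0]$. You also do not justify why the newly infected vertices after $t_1$ must hit the giant component $C$: each newly infected vertex was in the current $(r-1)$-set, but nothing forces it to lie in $C$ rather than in a small component.

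The paper avoids both difficulties by stopping at $t_0$ rather than pushing to $\Theta(1/p)$. At time $t_0$ one already knows (Lemma~\ref{early}) that $|\infec(t_0)\setminus\checked(t_0)|\ge (a_c+\omega_0)/2$, so there is a reserve set $A$ of infected-but-unprocessed vertices of that size, disjoint from $\checked(t_0)$. The set of vertices outside $\checked(t_0)\cup A$ with at least $r-1$ neighbours in $\checked(t_0)$ has size $\Theta(p^{-1})$, not $\Theta(n)$ (Lemma~\ref{almostinfected}); the induced subgraph on any $3rp^{-1}/4$ of them is a fresh copy of $G(3rp^{-1}/4,\,p)$ with expected degree $3r/4>1$, and Bollob\'as--Riordan (Theorem~\ref{lingiant}) gives a giant $U$ of size at least $p^{-1}/2$ with failure probability $\exp(-\Omega(p^{-1}))$. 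Now the term $\exp(-(a_c+\omega_0)/4)$ arises cleanly: it is the probability $(1-p)^{|U|\cdot|A|}\le\exp\bigl(-p\cdot\tfrac{p^{-1}}{2}\cdot\tfrac{a_c+\omega_0}{2}\bigr)$ that no edge runs from the reserve $A$ into $U$. One such edge infects all of $U$, and from $|U|\ge p^{-1}/2$ two Chernoff steps (Lemmas~\ref{Chernoff1} and~\ref{Chernoff2}) finish with errors $\exp(-\Omega(n))$ and $\exp(-\Omega(p^{-1}))$, both absorbed.

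A smaller point: your final step uses Markov's inequality, which yields only an $o(1)$ failure probability with no explicit rate. Since the theorem claims the specific bound $1-\exp(-\omega_0^2/(10t_0))-\exp(-(a_c+\omega_0)/4)$, you need every remaining error to be dominated by one of these; Chernoff (as in Lemmas~\ref{Chernoff1}--\ref{Chernoff2}) is required, Markov is not enough.
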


\paragraph{Proof Technique.}
When the number of infected vertices is small (at most \begin{math}t_0\end{math}), we introduce a martingale to show that the number of infected vertices is concentrated around its expectation with \emph{exponentially} high probability. The martingale resembles the one introduced in \cite{MR3025687}, however the maximal one step difference in our martingale is significantly smaller and thus provides a tighter concentration bound (Lemma \ref{conc}). 

In the subcritical regime, the expected number of infected vertices is less than \begin{math}t_c<t_0\end{math} and therefore the martingale argument alone implies the result (Section \ref{subcritical}).

In the supercritical regime, this is not enough as the number of infected vertices will reach \begin{math}t_0\end{math} with exponentially high probability. In fact, at least \begin{math}t_0+a_c\end{math} vertices become infected (Lemma \ref{early}). Now take a subset of the infected vertices with size \begin{math}t_0\end{math} and consider the vertices with at least \begin{math}r-1\end{math} neighbours in this set. The size of this set is roughly \begin{math}rp^{-1}\end{math} (Lemma \ref{almostinfected}) and the subgraph spanned by these vertices is also a binomial random graph, \begin{math}G(rp^{-1},p)\end{math}. Since the seminal work of Erd\H{o}s and R\'enyi \cite{MR0125031}, it is known that this graph has with probability \begin{math}1+o(1)\end{math} a linear sized giant component. More recently, Bollob\'as and Riordan \cite{arXiv:1403.6558} showed that this happens with exponentially high probability (Theorem \ref{lingiant}). Should any vertex in the giant component have an additional infected neighbour, then every vertex in the giant will become infected eventually. We show that this happens with exponentially high probability. 

Thus we have \begin{math}\Omega(p^{-1})\end{math} infected vertices. After this, the process ends in two steps and this can be shown  by two simple applications of the Chernoff bound (Lemmas \ref{Chernoff1} and \ref {Chernoff2}).

\section{Preliminaries}

We will use the following form of the Chernoff bound.

\begin{theorem}\cite{MR2283885}\label{chernoff}
Let \begin{math}X\sim\mathrm{Bin}(n,p)\end{math}, i.e.\ a binomial random variable with parameters \begin{math}n\end{math} and \begin{math}p\end{math}. Then for any \begin{math}\lambda>0\end{math} 
\begin{equation*}
\mathbb{P}[X-\mathbb{E}(X)\leq -\lambda]\leq \exp\left(-\frac{\lambda^2}{2\mathbb{E}(X)}\right) \quad \mbox{and} \quad
\mathbb{P}[X-\mathbb{E}(X)\geq \lambda]\leq \exp\left(-\frac{\lambda^2}{2(\mathbb{E}(X)+\lambda/3)}\right).
\end{equation*}
\end{theorem}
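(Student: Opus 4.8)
The plan is to prove both inequalities by the standard exponential-moment (Chernoff) method. Write $X = \sum_{i=1}^n X_i$ as a sum of independent Bernoulli$(p)$ variables and put $\mu := \mathbb{E}(X) = np$. For the upper tail, fix a parameter $s > 0$ and apply Markov's inequality to $e^{sX}$:
\[
\mathbb{P}[X - \mu \geq \lambda] \;=\; \mathbb{P}\big[e^{sX} \geq e^{s(\mu + \lambda)}\big] \;\leq\; e^{-s(\mu+\lambda)}\,\mathbb{E}\big[e^{sX}\big] \;=\; e^{-s(\mu+\lambda)}\,(1 - p + p e^{s})^{n},
\]
the last equality by independence. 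The inequality $1 + x \leq e^{x}$ with $x = p(e^{s}-1)$ gives $(1-p+pe^{s})^{n} \leq \exp(\mu(e^{s}-1))$, hence $\mathbb{P}[X-\mu\geq\lambda] \leq \exp\big(\mu(e^{s}-1-s) - s\lambda\big)$.

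Now optimize in $s$. With $\delta := \lambda/\mu$ the minimizing choice is $s = \ln(1+\delta) > 0$, which yields the classical bound $\mathbb{P}[X-\mu\geq\lambda] \leq \exp(-\mu\,\varphi(\delta))$, where $\varphi(\delta) := (1+\delta)\ln(1+\delta) - \delta$. To pass to the stated form I would use the elementary estimate $\varphi(\delta) \geq (\delta^{2}/2)/(1+\delta/3)$ for all $\delta \geq 0$; substituting $\delta = \lambda/\mu$ and simplifying converts $\exp(-\mu\,\varphi(\lambda/\mu))$ exactly into $\exp\big(-\lambda^{2}/(2(\mu+\lambda/3))\big)$.

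The lower tail is handled symmetrically by applying Markov's inequality to $e^{-sX}$ for $s > 0$: $\mathbb{P}[X-\mu\leq-\lambda] \leq e^{s(\mu-\lambda)}(1-p+pe^{-s})^{n} \leq \exp\big(\mu(e^{-s}-1+s) - s\lambda\big)$. Here the weaker inequality $e^{-s}-1+s \leq s^{2}/2$ (valid for $s \geq 0$) already suffices; inserting it and taking $s = \lambda/\mu$ gives $\exp(\mu s^{2}/2 - s\lambda) = \exp(-\lambda^{2}/(2\mu))$, as claimed. (If $\lambda > \mu$ the event is empty, but the derived bound holds in any case, so no side condition is needed.)

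The only part that is not completely routine is the calculus inequality $\varphi(\delta) \geq (\delta^{2}/2)/(1+\delta/3)$ behind the $\lambda/3$ correction: it is sharp to third order in $\delta$, so a crude Taylor estimate is not enough. I would prove it by clearing denominators, i.e.\ by showing $g(\delta) := (6+2\delta)\varphi(\delta) - 3\delta^{2} \geq 0$ on $[0,\infty)$; a short computation gives $g(0) = g'(0) = g''(0) = 0$ and $g'''(\delta) = 4\delta/(1+\delta)^{2} \geq 0$, whence $g \geq 0$. Everything else — the moment-generating-function factorization, the bound $1+x \leq e^{x}$, and the single-variable optimization in $s$ — is standard; indeed the statement is quoted verbatim from \cite{MR2283885}, so in the paper one may simply cite it.
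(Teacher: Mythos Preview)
Your proof is correct and is the standard exponential-moment derivation of these Chernoff bounds; the calculus check of $g'''(\delta)=4\delta/(1+\delta)^2\geq 0$ for the inequality $\varphi(\delta)\geq(\delta^2/2)/(1+\delta/3)$ is fine, and the lower-tail shortcut via $e^{-s}-1+s\leq s^2/2$ works as stated. Note, however, that the paper does not prove this theorem at all: it is stated as a preliminary result with the citation \cite{MR2283885} and used as a black box, exactly as you yourself observe in your last sentence. So there is nothing to compare against---your argument simply supplies a (correct, standard) proof where the paper gives only a reference.
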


Let \begin{math}M_0,M_1,\ldots,M_i \end{math} be a sequence of random variables and denote by \begin{math}\filter{i}\end{math} the filter generated by \begin{math}M_0,\ldots,M_i \end{math}. We say \begin{math}M_0,\ldots,M_k \end{math} forms a martingale if for every \begin{math}0\leq i \leq k\end{math} we have \begin{math}\mathbb{E}(|M_i|)<\infty \end{math} and for every \begin{math}1 \leq i \leq k \end{math}
\begin{equation*}
\mathbb{E}[M_{i}|\filter{i-1}]=M_{i-1}.
\end{equation*}
The following concentration bound on martingales due to Chung and Lu \cite{MR2283885} will prove to be vital.
\begin{theorem}\label{marconc}\cite{MR2283885}
For \begin{math}m_0\in \reals\end{math} let \begin{math}M_0=m_0,M_1,\ldots,M_k\end{math} be a martingale whose conditional variance and differences satisfy the following: for each \begin{math}1\leq i \leq k\end{math}, \begin{itemize}
\item \begin{math}\mathrm{Var}[M_i|M_{i-1},\ldots, M_0]\leq \sigma_i^2\end{math};
\item \begin{math}|M_i-M_{i-1}|\leq m\end{math} for some positive \begin{math}m\end{math}. 
\end{itemize}
Then for any \begin{math}\lambda>0\end{math}, we have
\begin{equation*}
\mathbb{P}[M_k - M_0 \geq \lambda]\leq \exp\left(-\frac{\lambda^2}{2\left(\sum_{i=1}^k \sigma_i^2+m\lambda/3 \right)}\right).
\end{equation*}
\end{theorem}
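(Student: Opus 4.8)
The plan is to run the standard exponential-moment argument underlying Bernstein--Freedman type inequalities. Put $D_i:=M_i-M_{i-1}$ for $1\le i\le k$; then $\mathbb{E}[D_i\mid\filter{i-1}]=0$ by the martingale property, $|D_i|\le m$ almost surely by hypothesis, and $\mathbb{E}[D_i^{2}\mid\filter{i-1}]=\mathrm{Var}[M_i\mid M_{i-1},\ldots,M_0]\le\sigma_i^{2}$. Fix $\theta>0$, to be optimised only at the very end; I will bound $\mathbb{E}\big[e^{\theta(M_k-M_0)}\big]$ and then apply Markov's inequality in the form $\mathbb{P}[M_k-M_0\ge\lambda]\le e^{-\theta\lambda}\,\mathbb{E}\big[e^{\theta(M_k-M_0)}\big]$.

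The first step is a one-step bound on the conditional moment generating function: for each $i$,
\begin{equation*}
\mathbb{E}\big[e^{\theta D_i}\mid\filter{i-1}\big]\le\exp\!\left(\frac{e^{\theta m}-1-\theta m}{m^{2}}\,\sigma_i^{2}\right).
\end{equation*}
For this I would use that $\psi(x):=(e^{x}-1-x)/x^{2}$ (with $\psi(0)=1/2$) is nondecreasing on the real line; since $\theta D_i\le\theta m$ almost surely, this gives the pointwise inequality $e^{\theta D_i}\le 1+\theta D_i+\frac{e^{\theta m}-1-\theta m}{m^{2}}\,D_i^{2}$, and taking the conditional expectation (using $\mathbb{E}[D_i\mid\filter{i-1}]=0$ and $\mathbb{E}[D_i^{2}\mid\filter{i-1}]\le\sigma_i^{2}$) together with $1+u\le e^{u}$ yields the claim.

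The second step is to iterate. With $V:=\sum_{i=1}^{k}\sigma_i^{2}$, peeling the increments off one at a time,
\begin{equation*}
\mathbb{E}\big[e^{\theta(M_k-M_0)}\big]=\mathbb{E}\Big[e^{\theta(M_{k-1}-M_0)}\,\mathbb{E}\big[e^{\theta D_k}\mid\filter{k-1}\big]\Big]\le\mathbb{E}\big[e^{\theta(M_{k-1}-M_0)}\big]\exp\!\left(\frac{e^{\theta m}-1-\theta m}{m^{2}}\,\sigma_k^{2}\right),
\end{equation*}
so inductively $\mathbb{E}\big[e^{\theta(M_k-M_0)}\big]\le\exp\!\big(\frac{e^{\theta m}-1-\theta m}{m^{2}}\,V\big)$, and therefore for every $\theta>0$
\begin{equation*}
\mathbb{P}[M_k-M_0\ge\lambda]\le\exp\!\left(-\theta\lambda+\frac{e^{\theta m}-1-\theta m}{m^{2}}\,V\right).
\end{equation*}

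The third step is the choice of $\theta$. Using the elementary bound $e^{x}-1-x\le\frac{x^{2}/2}{1-x/3}$, valid for $0\le x<3$ (it follows term by term from $k!\ge 2\cdot 3^{k-2}$), with $x=\theta m$ the exponent above is at most $-\theta\lambda+\frac{\theta^{2}V/2}{1-\theta m/3}$; taking $\theta=\frac{\lambda}{V+m\lambda/3}$, so that $\theta m/3<1$ and $1-\theta m/3=V/(V+m\lambda/3)$, collapses it to $-\theta\lambda+\theta\lambda/2=-\theta\lambda/2=-\frac{\lambda^{2}}{2(V+m\lambda/3)}$, which is the asserted inequality. The part needing the most care --- indeed the only nontrivial analytic input --- is the one-step estimate, namely the monotonicity of $(e^{x}-1-x)/x^{2}$ and the companion bound $e^{x}-1-x\le(x^{2}/2)/(1-x/3)$; everything else is bookkeeping with the conditioning. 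Since this concentration inequality is a known result of Chung and Lu, one may of course simply quote it, as is done here.
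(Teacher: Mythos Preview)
Your proof is correct and complete: the monotonicity of $(e^{x}-1-x)/x^{2}$ gives the one-step conditional mgf bound, the tower property iterates it, and the elementary inequality $e^{x}-1-x\le (x^{2}/2)/(1-x/3)$ together with the explicit choice $\theta=\lambda/(V+m\lambda/3)$ collapses the exponent exactly to $-\lambda^{2}/(2(V+m\lambda/3))$. Each step checks out.

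As for comparison with the paper: there is nothing to compare. The paper does not prove Theorem~\ref{marconc}; it merely cites it from Chung and Lu~\cite{MR2283885} and uses it as a black box inside Lemma~\ref{conc}. Your write-up therefore supplies strictly more than the paper does on this point. You already note this yourself in the last sentence, so you are aware of the situation; the only thing I would add is that in a write-up following the paper's conventions, one would typically omit the argument entirely and simply invoke the reference.
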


We will also need the following Theorem on the appearance of a giant component in \begin{math}G(n,p)\end{math} by Bollob\'as and Riordan \cite{arXiv:1403.6558}.

\begin{theorem}\label{lingiant}\cite{arXiv:1403.6558}
Let \begin{math}c>1\end{math} be a constant independent of \begin{math}n\end{math} and let \begin{math}\varepsilon>0\end{math} independent of \begin{math}n\end{math}. Then with probability \begin{math}1-\exp(-\Omega(n))\end{math} the binomial random graph \begin{math}G(n,c/n)\end{math} has  a component of size at least \begin{math}(1-\varepsilon)\rho n\end{math}, where \begin{math}\rho\in(0,1)\end{math} is the unique positive solution of \begin{math}1-\rho=\exp(-c\rho)\end{math}.
\end{theorem}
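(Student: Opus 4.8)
The statement asks only for the \emph{existence} of one component of size at least $(1-\varepsilon)\rho n$, not for its uniqueness, so the plan is a two-round exposure (``sprinkling''). First I would split $c = c_1 + c_2$ into constants with $c_1 > 1$ and $c_2 > 0$, taking $c_1$ close enough to $c$ that the positive root $\rho_1$ of $1 - \rho_1 = e^{-c_1\rho_1}$ satisfies $\rho_1 > (1-\varepsilon/3)\rho$; then $G(n, c/n)$ stochastically contains the union of two independent graphs $G_1 \sim G(n, c_1/n)$ and $G_2 \sim G(n, c_2/n)$, since $(1 - c_1/n)(1 - c_2/n) \ge 1 - c/n$. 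The idea is to produce inside $G_1$ a set of at least $(1-\varepsilon/2)\rho n$ vertices lying in only boundedly many components, and then to glue those components together using the independent edges of $G_2$.

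For the first step, fix a small constant $\delta_0 > 0$ (to be pinned down later) and run the standard component exploration of $G_1$ with a whole set $U_0$ of $\lceil \delta_0 n\rceil$ vertices initially active: at each step take one active vertex, reveal its still-unexposed edges (each present independently with probability $c_1/n$), and activate the vertices discovered. If $\hat S_t$ denotes the number of vertices seen after $t$ steps, then $\hat S_t - \hat S_{t-1} \sim \mathrm{Bin}(n - \hat S_{t-1}, c_1/n)$ and the exploration stops exactly when $\hat S_t = t$. A stagewise Chernoff argument (the differential-equation method, with exponentially small error) shows that $\hat S_t/n$ closely tracks the solution of $\dot s = c_1(1 - s)$ with $s(0) = \delta_0$, uniformly in $t$, except with probability $e^{-\Omega(n)}$; for small $\delta_0$ this fluid trajectory stays strictly above the diagonal until $\tau \approx \rho_1 > (1-\varepsilon/3)\rho$, with the gap $s(\tau) - \tau$ bounded below by a positive constant on $[0, (1-\varepsilon/2)\rho]$. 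Hence, for $\delta_0$ small enough, with probability $1 - e^{-\Omega(n)}$ the exploration does not stop before step $(1-\varepsilon/2)\rho n$, so the revealed vertex set $U$ has $|U| \ge (1-\varepsilon/2)\rho n$ and is a union of at most $\lceil \delta_0 n\rceil$ components $D_1, \dots, D_N$ of $G_1$.

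For the gluing step, condition on $G_1$ so that the above holds, expose the independent $G_2$, and form the auxiliary graph $H$ on $\{D_1, \dots, D_N\}$ in which $D_i$ and $D_j$ are joined whenever $G_2$ contains an edge between $D_i$ and $D_j$. Each component of $H$ yields a connected subgraph of $G_1 \cup G_2$ on the union of the corresponding $D_i$'s, so it suffices that some component of $H$ has total size $\sum |D_i| \ge \mu := (1-\varepsilon)\rho n$. If not, then the components of $H$ partition a vertex set of size $\ge (1-\varepsilon/2)\rho n$ into blocks each smaller than $\mu$, and an elementary greedy argument splits the $D_i$ into two classes each of total size $\ge \beta n$, with $\beta = \beta(\varepsilon, c) > 0$ a constant, between which no $G_2$-edge runs. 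Over the randomness of $G_2$ this has probability at most $2^N (1 - c_2/n)^{\beta^2 n^2} \le 2^{\lceil \delta_0 n\rceil}\, e^{-c_2\beta^2 n}$, which is $e^{-\Omega(n)}$ once $\delta_0$ was fixed small enough that $\delta_0 \ln 2 < c_2\beta^2$. Hence with probability $1 - e^{-\Omega(n)}$ some component of $G_1 \cup G_2 \subseteq G(n, c/n)$ already has at least $\mu = (1-\varepsilon)\rho n$ vertices, which is what we want.

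The step I expect to be the real obstacle is the exploration estimate of the second paragraph: pushing the fluid-limit approximation to a failure probability that is \emph{exponentially} small in $n$ — handled by cutting the time interval into a bounded number of stages and applying one binomial Chernoff bound per stage — and checking that the resulting $o(n)$ fluctuation cannot terminate the exploration before it has seen $(1-\varepsilon/2)\rho n$ vertices, which is precisely why $\delta_0$ must be chosen small enough that the fluid gap $s(\tau) - \tau$ exceeds that fluctuation on $[0, (1-\varepsilon/2)\rho]$. Getting the constant $\rho$ exactly (rather than merely $\Omega(1)$) is what forces $c_1$ close to $c$; since $c_2$ and then $\delta_0$ can be taken as small as needed, these choices are mutually consistent.
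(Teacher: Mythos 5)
The paper does not prove this statement: Theorem~\ref{lingiant} is quoted verbatim from Bollob\'as and Riordan \cite{arXiv:1403.6558} and used as a black box inside the proof of Theorem~\ref{mainsup}. So there is no ``paper's own proof'' to measure yours against; what you have written is a freestanding proof of a cited result.

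That said, your sprinkling argument is a sound and fairly standard route to the exponential error probability, and it differs in flavour from the Bollob\'as--Riordan approach, which goes through a tight martingale/random-walk analysis of a single exploration (hence their title ``an old approach''). Your version trades sharp control of one exploration for a rougher multi-source exploration plus a second round of edges to glue the resulting pieces; the price is losing the constant $\rho$ (you only recover $(1-\varepsilon)\rho$), which is exactly what the theorem allows. The skeleton is correct: the coupling $(1-c_1/n)(1-c_2/n)\ge 1-c/n$ gives $G_1\cup G_2\preceq G(n,c/n)$, and the greedy split of the $H$-blocks does produce two classes each of size at least $\beta n$ with $\beta=(\varepsilon/4)\rho$, because each block has size $<\mu=(1-\varepsilon)\rho n$ while the total exceeds $(1-\varepsilon/2)\rho n$, so the greedy imbalance is at most $\max_i b_i<\mu$ and each class gets at least $(T-\mu)/2>(\varepsilon/4)\rho n$. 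The union bound $2^{\lceil\delta_0 n\rceil}e^{-c_2\beta^2 n}$ then dictates the final choice $\delta_0\ln 2<c_2\beta^2$, and the order of choices ($c_1\to c_2\to\beta\to\delta_0$) is consistent, as you note.

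The one place that genuinely needs more than a sentence is the exploration estimate you already flag. Two remarks there. First, you should be explicit that the running lower bound on the ``surplus'' $\hat S_t - t$ near $t=0$ is only of order $\delta_0 n$ (the fluid gap $g(\tau)=1-(1-\delta_0)e^{-c_1\tau}-\tau$ has $g(0)=\delta_0$, and for small $\delta_0$ this endpoint is the minimum on $[0,(1-\varepsilon/2)\rho]$), so the deviation you may tolerate is $\Theta(\delta_0 n)$, not $o(n)$; a Chernoff bound with $\lambda=\Theta(\delta_0 n)$ still gives failure probability $e^{-\Omega(\delta_0^2 n)}=e^{-\Omega(n)}$ for fixed $\delta_0$, which is what you need. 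Second, to make the stagewise Chernoff argument work cleanly, one should decouple the stopping time from the concentration by analysing the ``unstopped'' process $\hat S_t$ (reveal a $\mathrm{Bin}(n-\hat S_{t-1},c_1/n)$ at every step regardless of whether active vertices remain) and note that the actual exploration agrees with it up to the first time $\hat S_t=t$; monotonicity in $\hat S_{t-1}$ then lets you sandwich the increments within each stage and apply one binomial Chernoff bound per stage. With these two points spelled out, the proof is complete.
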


\section{Setup: Martingale}

In order to analyse the bootstrap percolation on \begin{math}G(n,p)\end{math} we will use the following reformulation due to Scalia-Tomba \cite{MR798872} as in \cite{MR3025687}. Roughly speaking they examine the infected vertices one by one and determine the vertices which have at least \begin{math}r\end{math} neighbours in the set of previously examined vertices. The set of examined vertices until step \begin{math}t\end{math} is denoted by \begin{math}\checked(t)\end{math} and the set of infected vertices by \begin{math}\infec(t)\end{math}.
Formally let \begin{math}\infec(0)\end{math} be the set of initially infected vertices of size \begin{math}a\end{math} and without the loss of generality we may assume that \begin{math}\infec(0)=\{1,...,a\}\end{math}. Set \begin{math}\checked(0)=\emptyset\end{math}. For each step \begin{math}t\in\naturals\end{math}, if \begin{math}\infec(t-1)\backslash \checked(t-1)\neq\emptyset\end{math}, then let \begin{math}U_t=\{u_t\}\end{math}, where \begin{math}u_t\end{math} is a vertex in \begin{math}\infec(t-1)\backslash \checked(t-1)\end{math} selected according to an arbitrary rule, otherwise set \begin{math}U_t=\emptyset\end{math}. Set \begin{math}\checked(t):=\checked(t-1)\cup U_t\end{math}. Now for \begin{math}t\geq 0\end{math} and each \begin{math}i\in[n-a]:=\{1,\ldots, n-a\}\end{math} let \begin{math}X(t,i)\end{math} be the indicator random variable for the event that the vertex \begin{math}a+i\end{math} has at least \begin{math}r\end{math} neighbours in \begin{math}\checked(t)\end{math} and set 
\begin{equation*}
\infec(t):=\infec(0)\cup\{a+i:X(t,i)=1, i\in [n-a]\}.
\end{equation*}
The process stops when \begin{math}t=n\end{math}. 

Clearly \begin{math}\checked(t)\subset \infec(t)\end{math}. Let \begin{math}T\end{math} denote the smallest value of \begin{math}t\end{math} such that \begin{math}\infec(t)=\checked(t)\end{math}. Note that \begin{math}t\leq T\end{math} implies that \begin{math}|\checked(t)|=t\end{math} and thus \begin{math}T\end{math} is also the smallest \begin{math}t\end{math} such that \begin{math}|\infec(t)|=t\end{math}. Since \begin{math}|\infec(t)|\leq n\end{math} for every natural number \begin{math}0\leq t \leq n\end{math} we have that \begin{math}T\leq n\end{math}. Note further that \begin{math}\infec(T)=\infec_f\end{math}. 

In order to have a better control on the maximal number of vertices which can become infected in a single step, we refine the process by dividing every step into rounds, in such a way that in each round \emph{exactly one vertex} \begin{math}v\in [n]\backslash \infec(0)\end{math} is examined (regardless whether it was examined in earlier rounds or not). Thus each step \begin{math}1\leq t\leq n\end{math} consists of \begin{math}n-a\end{math} rounds and round \begin{math}i\end{math} of step \begin{math}t\end{math} is denoted by \begin{math}(t,i)\end{math}. We denote the step following \begin{math}(t,i)\end{math} by \begin{math}(t,i)+1\end{math} and the preceding step by \begin{math}(t,i)-1\end{math}. Also the ordering of the rounds is given by the lexicographical order i.e. \begin{math}(\tau,\iota)<(t,i)\end{math} if either \begin{math}\tau<t\end{math} or \begin{math}\tau=t\end{math} and \begin{math}\iota<i\end{math}.

In round \begin{math}i\end{math} of step \begin{math}t\end{math} we examine if vertex \begin{math}a+i\end{math} has at least \begin{math}r\end{math} neighbours in \begin{math}\checked(t)\end{math} and if it has we add it to the set of infected vertices. Formally for \begin{math}(t,i)\geq (1,1)\end{math}
\begin{align*}
\infec((t,i)+1):=\infec(0)&\cup \{a+j: j\leq i, X(t,j)=1\}\cup \{a+j: j> i, X(t-1,j)=1\}.
\end{align*}
Clearly we have \begin{math}\infec(t)=\infec(t,n-a)\end{math}. For consistency define \begin{math} \infec(0,n-a):=\infec(0) \end{math}.

Define a function \begin{math}\pi:\naturals\rightarrow [0,1]\end{math} by
\begin{equation*}
\pi(t):=\left\{
\begin{array}{ll}
\mathbb{P}[\mathrm{Bin(t,p)}\geq r],  & \mbox{for } t\leq T\\ 
\mathbb{P}[\mathrm{Bin(T,p)}\geq r], & \mbox{for } t >T 
\end{array}
\right.
\end{equation*}
and note that \begin{math}\pi(t)\end{math} is a random variable.

For \begin{math}(t,i)\geq (0,n-a) \end{math}, define the random variable 
\begin{equation}\label{martingaledef}
M(t,i):= \sum_{j=1}^i \frac{X(t,j)-\pi(t)}{1-\pi(t)}+\sum_{j=i+1}^{n-a}\frac{X(t-1,j)-\pi(t-1)}{1-\pi(t-1)}.
\end{equation}
We will denote by \begin{math}\filter{t,i}\end{math} the filter generated by \begin{math}M(0,n-a),\ldots,M(t,i)\end{math}.

\begin{lemma}
The sequence of random variables \begin{math}M(0,n-a),\ldots, M(n,n-a)\end{math} forms a martingale.
\end{lemma}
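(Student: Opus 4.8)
The plan is to verify the two defining properties of a martingale for the sequence $M(0,n-a),\ldots,M(n,n-a)$ with respect to the filtration $\filter{t,i}$: integrability (trivial, since each $M(t,i)$ is a finite sum of bounded random variables once we observe that $\pi(t)$ is bounded away from $1$ for the relevant range, or more simply that each summand lies in a bounded interval given the filtration), and the one-step conditional expectation identity $\mathbb{E}[M(t,i)\mid \filter{(t,i)-1}]=M((t,i)-1)$. The entire content is the second property, and it should be checked round by round.

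First I would fix a round $(t,i)$ with $(t,i)\geq (1,1)$ and compare $M(t,i)$ with $M((t,i)-1)$. Writing out \eqref{martingaledef} for both, essentially all terms cancel: the two sums differ only in that the term indexed by $j=i$ moves from the ``$X(t-1,\cdot)$'' block to the ``$X(t,\cdot)$'' block. Hence
\[
M(t,i)-M((t,i)-1)=\frac{X(t,i)-\pi(t)}{1-\pi(t)}-\frac{X(t-1,i)-\pi(t-1)}{1-\pi(t-1)},
\]
(with the obvious adjustment when $i=1$, where the previous round is $(t-1,n-a)$ and $\pi(t-1)$ versus $\pi(t-2)$ must be tracked, but the same cancellation occurs). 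So it suffices to show
\[
\mathbb{E}\!\left[\left.\frac{X(t,i)-\pi(t)}{1-\pi(t)}\,\right|\,\filter{(t,i)-1}\right]=\frac{X(t-1,i)-\pi(t-1)}{1-\pi(t-1)}.
\]

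The key step is the conditional distribution of $X(t,i)$ given $\filter{(t,i)-1}$. Here one distinguishes two cases according to whether vertex $a+i$ was already infected before round $(t,i)$, i.e.\ whether $X(t-1,i)=1$. If $X(t-1,i)=1$, then $a+i$ had $\geq r$ neighbours in $\checked(t-1)\subseteq\checked(t)$, so deterministically $X(t,i)=1$, and both sides equal $1/(1-\pi(t-1))\cdot(1-\pi(t-1))$... more carefully, the left side is $(1-\pi(t))/(1-\pi(t))=1$ if $t\le T$ — wait, one must be careful that on the event $X(t-1,i)=1$ we may have $\pi(t)\ne\pi(t-1)$; but since $X(t,i)=1$ deterministically here, the left side is $1$ and the right side is $(1-\pi(t-1))/(1-\pi(t-1))=1$, so they agree. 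If $X(t-1,i)=0$, then vertex $a+i$ has at most $r-1$ neighbours in $\checked(t-1)$, and the edges from $a+i$ to $\checked(t)\setminus\checked(t-1)=U_t$ (a single new vertex, or none) have not yet been exposed and are independent $\mathrm{Bernoulli}(p)$ conditionally on $\filter{(t,i)-1}$. One then computes $\mathbb{P}[X(t,i)=1\mid \filter{(t,i)-1}, X(t-1,i)=0]$ as the probability that, starting from $k\le r-1$ exposed neighbours in $\checked(t-1)$, the newly exposed edge(s) push the count to $\geq r$; a short combinatorial identity (the same one implicitly used in \cite{MR3025687}) shows this conditional probability equals $(\pi(t)-\pi(t-1))/(1-\pi(t-1))$ when $|\checked(t)|=t$, $|\checked(t-1)|=t-1$ and $t\le T$. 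Plugging $X(t-1,i)=0$ into the right-hand side gives $-\pi(t-1)/(1-\pi(t-1))$, and the left-hand side evaluates to $\frac{1}{1-\pi(t)}\big((\pi(t)-\pi(t-1))/(1-\pi(t-1))-\pi(t)\big)=-\pi(t-1)/(1-\pi(t-1))$ after simplification, matching. For rounds with $t>T$ (so $\infec(t)=\checked(t)$ has stabilised and $\pi$ is constant) the increment is identically $0$ and the identity is immediate; the boundary round $i=1$ and the transition from step $t-1$ to step $t$ are handled by the same two cases. The main obstacle is bookkeeping: correctly matching up indices across the step boundary, handling the randomness in $\pi(t)$ (which depends on $T$, hence on the whole history), and verifying the combinatorial identity relating $\pi(t)$, $\pi(t-1)$ and the conditional infection probability — none of these is deep, but they must be done carefully to be sure the telescoping and the conditional-expectation computation are valid on every event in $\filter{(t,i)-1}$.
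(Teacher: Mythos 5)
Your proposal is correct and follows essentially the same route as the paper: reduce to the one-step increment identity $\mathbb{E}[M(t,i)-M((t,i)-1)\mid \filter{(t,i)-1}]=0$ via telescoping, then verify it by case analysis on $X(t-1,i)$ and on whether $t\le T$, using the conditional probability $\mathbb{P}[X(t,i)=1\mid \filter{(t,i)-1},X(t-1,i)=0]=(\hat{\pi}(t)-\hat{\pi}(t-1))/(1-\hat{\pi}(t-1))$. The only thing the paper spells out that you defer to ``bookkeeping'' is the explicit argument that $T$ (hence $\pi(\tau)$ for $\tau\le t$) and the values $X(\tau,\iota)$ for $(\tau,\iota)<(t,i)$ are $\filter{(t,i)-1}$-measurable, which it derives from \eqref{marinfec} and \eqref{onestepdiff}; this is the one step you should not omit in a final write-up.
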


\begin{proof}
Fix \begin{math}1\leq t\leq n\end{math} and \begin{math}1\leq i \leq n-a\end{math}. For every \begin{math}\tau< t\end{math}, we can express from \eqref{martingaledef} the number of infected vertices in step \begin{math}\tau\end{math}:
\begin{equation}\label{marinfec}
|\infec(\tau)|=a+\sum_{\iota=1}^{n-a}X(\tau,\iota)\stackrel{\eqref{martingaledef}}{=}a+M(\tau,n-a)(1-\pi(\tau))+(n-a)\pi(\tau).
\end{equation}

Recall that \begin{math}\hat{\pi}(t)=\mathbb{P}[\mathrm{Bin}(t,p)\geq r]\end{math}. We will denote by \begin{math}T'\end{math} the smallest value of \begin{math}t\end{math} which satisfies 
 \begin{math}a+M(t,n-a)(1-\hat{\pi}(t))+(n-a)\hat{\pi}(t)=t\end{math}.
 Because \begin{math}\pi(t)=\hat{\pi}(t)\end{math} when \begin{math}t\leq T\end{math}, we have \begin{math}T=T'\end{math}. Given the 
filter \begin{math}\filter{(t,i)-1}\end{math} one can establish if \begin{math}a+M(\tau,n-a)(1-\hat{\pi}(\tau))+(n-a)\hat{\pi}(\tau)=\tau\end{math} for some \begin{math}\tau<t\end{math}. Therefore, it can be determined whether the event \begin{math}T'< t\end{math} or \begin{math}t\geq T'\end{math} holds. In particular, if \begin{math}T'< t\end{math}, then the exact value of \begin{math}T'\end{math} can be determined. 

For each \begin{math}\tau\leq t\end{math}, since \begin{math}\pi(\tau)\end{math} depends only on the value of \begin{math}T=T'\end{math}, we can also determine the value of \begin{math}\pi(\tau)\end{math}, i.e.\  \begin{math}\mathbb{E}[\pi(\tau)|\filter{(t,i)-1}]=\pi(\tau)\end{math} for \begin{math}\tau\leq t\end{math}. 

Note that \begin{math}X(0,i)=0\end{math} for every \begin{math}1 \leq i \leq n-a\end{math} and that for every \begin{math}(1,1)\leq (\tau,\iota)<(t,i)\end{math} we can easily compute from \eqref{martingaledef}
\begin{equation}\label{onestepdiff}
M(\tau,\iota)-M((\tau,\iota)-1)=\frac{X(\tau,\iota)-\pi(\tau)}{1-\pi(\tau)}-\frac{X(\tau-1,\iota)-\pi(\tau-1)}{1-\pi(\tau-1)}.
\end{equation}
Therefore, based on the 
filter \begin{math}\filter{(t,i)-1}\end{math}, the value of \begin{math}X(\tau,\iota)\end{math} can be determined for every \begin{math}(\tau,\iota)<(t,i)\end{math}.

Next we shall show that
\begin{align}
\mathbb{E}\left[\left. \frac{X(t,i)-\pi(t)}{1-\pi(t)}\right| \filter{(t,i)-1}\right]&=\frac{\mathbb{E}[X(t,i)| \filter{(t,i
)-1}]-\pi(t)}{1-\pi(t)}=\frac{X(t-1,i)-\pi(t-1)}{1-\pi(t-1)}.\label{expectation}
\end{align}
To this end, observe that if \begin{math}X(t-1,i)=1\end{math}, then we have \begin{math}X(t,i)=1\end{math} with probability 1 and in this case both sides of equation \eqref{expectation} equal 1. 

Now assume that \begin{math}X(t-1,i)=0\end{math}. When \begin{math}t>T\end{math}, we have \begin{math}X(t,i)=X(t,i-1)=0\end{math} with probability 1 and by the definition of \begin{math}\pi(t)\end{math} we have \begin{math}\pi(t)=\pi(t-1)=\hat{\pi}(T)\end{math}. Evaluating both sides of equation \eqref{expectation} gives us  \begin{math}-\hat{\pi}(T)/(1-\hat{\pi}(T))\end{math}. 
When \begin{math}t\leq T\end{math} and \begin{math}X((t,i)-1)=0\end{math}, we have \begin{math}\pi(t)=\hat{\pi}(t)\end{math} and thus
\begin{align*}
\mathbb{P}\left[X(t,i)=0| \filter{(t,i)-1}\right]&=\frac{1-\hat{\pi}(t)}{1-\hat{\pi}(t-1)}
=1-\frac{\hat{\pi}(t)-\hat{\pi}(t-1)}{1-\hat{\pi}(t-1)}.
\end{align*}
Therefore in this case
\begin{align*}
\mathbb{E}\left[\left.\frac{X(t,i)-\pi(t)}{1-\pi(t)}\right| \filter{(t,i)-1}\right]&=-\frac{\hat{\pi}(t)}{1-\hat{\pi}(t)} \frac{1-\hat{\pi}(t)}{1-\hat{\pi}(t-1)}+1\cdot\frac{\hat{\pi}(t)-\hat{\pi}(t-1)}{1-\hat{\pi}(t-1)}=-\frac{\hat{\pi}(t-1)}{1-\hat{\pi}(t-1)}
\end{align*}
and thus \eqref{expectation} holds.
According to \eqref{onestepdiff} we have that
\begin{align*}
\mathbb{E}&[M(t,i)-M((t,i)-1)|\filter{(t,i)-1}]\\
&=\mathbb{E}\left[\left.\frac{X(t,i)-\pi(t)}{1-\pi(t)}\right| \filter{(t,i)-1}\right]-\frac{X(t-1)-\pi(t-1)}{1-\pi(t-1)}\stackrel{\eqref{expectation}}{=}0.
\end{align*}
\end{proof}

\begin{lemma}\label{conc}
Let \begin{math}t\in \{0,\ldots,n\}\end{math} and \begin{math}\lambda\in \reals^+\end{math} be given.
\begin{equation*}
\mathbb{P}\left[\bigwedge_{(0,n-a)\leq(\tau,i)\leq (t,n-a)} M(\tau,i)>-\lambda \right]\geq 1-\exp\left(-\frac{\lambda^2(1-\hat{\pi}(t))^3}{2(n\hat{\pi}(t)+\lambda/3)}\right)
\end{equation*}
and
\begin{equation*}
\mathbb{P}\left[\bigwedge_{(0,n-a)\leq(\tau,i)\leq (t,n-a)} M(\tau,i)<\lambda \right]\geq 1-\exp\left(-\frac{\lambda^2(1-\hat{\pi}(t))^3}{2(n\hat{\pi}(t)+\lambda/3)}\right).
\end{equation*}
\end{lemma}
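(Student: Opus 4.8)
The plan is to apply the Chung--Lu martingale inequality (Theorem~\ref{marconc}) to the martingale $M(0,n-a),\ldots,M(t,n-a)$, which is a prefix of the martingale from the previous lemma and hence itself a martingale with $M(0,n-a)=0$ (all $X(0,j)=0$ and $\hat\pi(0)=0$). Since the variance and one-step bounds I will derive are symmetric in $M$ and $-M$, it suffices to prove one two-sided estimate and apply it to $M$ and to $-M$: the upper-tail statement is the bound on $\mathbb{P}[\max_{(\tau,i)\le(t,n-a)}M(\tau,i)\ge\lambda]$, and the lower-tail statement follows from the corresponding bound on $\mathbb{P}[\max_{(\tau,i)\le(t,n-a)}(-M(\tau,i))\ge\lambda]$ by taking complements.

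First I would bound the one-step difference. By \eqref{onestepdiff}, $M(\tau,\iota)-M((\tau,\iota)-1)$ is a difference of two terms of the form $(X-\pi)/(1-\pi)$ with $X\in\{0,1\}$, each lying in $\{-\pi/(1-\pi),1\}$, so the difference lies in $[-1/(1-\pi(\tau)),1/(1-\pi(\tau-1))]$. As $\pi$ is non-decreasing with $\pi(s)\le\hat\pi(s)\le\hat\pi(t)$ for $s\le t$, this gives $|M(\tau,\iota)-M((\tau,\iota)-1)|\le m:=1/(1-\hat\pi(t))$ for every round $(\tau,\iota)\le(t,n-a)$. This small one-step bound, which is possible precisely because each step was refined into $n-a$ single-vertex rounds, is the source of the improvement over \cite{MR3025687}.

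Next I would bound the conditional variance. Given $\filter{(\tau,\iota)-1}$, the second term in \eqref{onestepdiff} is measurable and so is $\pi(\tau)$ (by the measurability argument in the previous proof), whence $\mathrm{Var}[M(\tau,\iota)\mid\filter{(\tau,\iota)-1}]=(1-\pi(\tau))^{-2}\mathrm{Var}[X(\tau,\iota)\mid\filter{(\tau,\iota)-1}]$. As computed in that proof, conditionally $X(\tau,\iota)$ is either constant or $\mathrm{Bernoulli}\bigl((\hat\pi(\tau)-\hat\pi(\tau-1))/(1-\hat\pi(\tau-1))\bigr)$, so using $\mathrm{Var}(\mathrm{Bernoulli}(\theta))\le\theta$ together with monotonicity of $\hat\pi$ and $\pi(\tau)\le\hat\pi(\tau)$, the conditional variance is at most $(\hat\pi(\tau)-\hat\pi(\tau-1))/(1-\hat\pi(t))^3$ in all cases. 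Summing over the $n-a$ rounds of each step $\tau\in\{1,\ldots,t\}$ and telescoping, $\sum\sigma_i^2\le\frac{n-a}{(1-\hat\pi(t))^3}\sum_{\tau=1}^{t}(\hat\pi(\tau)-\hat\pi(\tau-1))=\frac{(n-a)\hat\pi(t)}{(1-\hat\pi(t))^3}\le\frac{n\hat\pi(t)}{(1-\hat\pi(t))^3}$.

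Finally, I would substitute $m=1/(1-\hat\pi(t))$ and $\sum\sigma_i^2\le n\hat\pi(t)/(1-\hat\pi(t))^3$ into Theorem~\ref{marconc}; since $1/(1-\hat\pi(t))\le1/(1-\hat\pi(t))^3$, the term $m\lambda/3$ is dominated by $\lambda/(3(1-\hat\pi(t))^3)$ and the bound collapses to exactly $\exp\bigl(-\lambda^2(1-\hat\pi(t))^3/(2(n\hat\pi(t)+\lambda/3))\bigr)$ for $\mathbb{P}[M(t,n-a)\ge\lambda]$. To upgrade this single-endpoint estimate to the maximal bound over all rounds, I would instead apply Theorem~\ref{marconc} to the martingale stopped at the first round $(\tau,\iota)$ with $M(\tau,\iota)\ge\lambda$ (which does not increase the conditional variances or differences): the stopped martingale finishes at a value $\ge\lambda$ exactly when some round attains $\lambda$. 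Running this for $M$ and for $-M$ and taking complements yields both displayed inequalities. I expect the two steps needing the most care to be the variance bookkeeping --- extracting the clean factor $n\hat\pi(t)$ out of the telescoping over the refined round structure --- and the passage from the single-endpoint form of Theorem~\ref{marconc} to the maximal inequality over all rounds.
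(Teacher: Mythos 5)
Your proposal is correct and follows essentially the same route as the paper's proof: the paper likewise reduces the two-sided bound to a one-sided bound via the martingale $-M$, replaces $M$ by the stopped martingale $\hat M$ (stopped the first time it reaches $\lambda$) to convert Theorem~\ref{marconc} into a maximal inequality, and derives exactly the same bounds $m=1/(1-\hat\pi(t))$ on the one-step difference and $S\le n\hat\pi(t)/(1-\hat\pi(t))^3$ on the cumulative conditional variance before absorbing $(1-\hat\pi(t))^{-1}\le(1-\hat\pi(t))^{-3}$ into the denominator. The only cosmetic difference is ordering: the paper introduces the stopped martingale at the start and bounds its increments directly, whereas you bound the increments of $M$ first and invoke the stopping argument afterward; both are valid since stopping does not increase conditional variances or one-step differences.
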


\begin{proof}
We will only show the bound on the probability that \begin{math}M(\tau,i)< \lambda\end{math} for each \begin{math}(\tau,i)\leq (t,n-a)\end{math}. The other case follows simply from the fact that if the random variables \begin{math}M(0,n-a),\ldots,M(t,n-a)\end{math} form a martingale, then \begin{math}-M(0,n-a),\ldots,-M(t,n-a)\end{math} is also a martingale and they both have the same conditional variance and maximal difference.
In order to show that the bounds hold for each round, we introduce the following martingale:
\begin{equation*}
\hat{M}(\tau,i)=\left\{
\begin{array}{ll}
M(\tau,i) &\mbox{if } \hat{M}((\tau,i)-1)< \lambda \\
\hat{M}((\tau,i)-1) & \mbox{otherwise}.
\end{array}
\right. 
\end{equation*}
Similarly to \begin{math}M(t,i)\end{math} we denote the filter generated by \begin{math} \hat{M}(0,n-a),\ldots,\hat{M}(t,i) \end{math} with \begin{math} \hat{\mathcal{F}}(t,i) \end{math}.
Note that if there exists a round \begin{math}(\tau,i)\end{math} such that \begin{math}M(\tau,i)\geq \lambda\end{math}, then we have \begin{math}\hat{M}(\tau',i')\geq \lambda\end{math} for every \begin{math}(\tau',i')\geq (\tau,i)\end{math}. Therefore \begin{math}\hat{M}(t,n-a)< \lambda\end{math} implies that for every \begin{math}(\tau,i)\leq (t,n-a)\end{math} we have \begin{math}M(\tau,i)< \lambda\end{math}.

By \eqref{onestepdiff} and since \begin{math}\pi(\tau)\leq \hat{\pi}(\tau)\end{math} with probability 1, we have 
\begin{equation*}
|\hat{M}(\tau,\iota)-\hat{M}((\tau,\iota)-1)|\leq\max\left\{\frac{\hat{\pi}(\tau)}{1-\hat{\pi}(\tau)}-\frac{\hat{\pi}(\tau-1)}{1-\hat{\pi}(\tau-1)},1+\frac{\hat{\pi}(\tau-1)}{1-\hat{\pi}(\tau-1)}\right\}
=\frac{1}{1-\hat{\pi}(\tau-1)}.
\end{equation*}
Note that \begin{math}M(0,n-a)=0\end{math}. 
Since \begin{math}\mathrm{Var}[\hat{M}(\tau,i)|\hat{\mathcal{F}}((\tau,i)-1)]=0\end{math} if \begin{math}\hat{M}((\tau,\iota)-1)\geq \lambda\end{math} and 
\begin{equation*}
\mathrm{Var}[\hat{M}(\tau,i)|\hat{\mathcal{F}}((\tau,i)-1)]= 
\mathrm{Var}[M(\tau,i)|\filter{(\tau,i)-1}]
\end{equation*}
otherwise, Theorem \ref{marconc} implies that
\begin{equation*}
\mathbb{P}[\hat{M}(t,n-a)\geq \lambda]\leq \exp\left(-\frac{\lambda^2}{2(S+\lambda/(3(1-\hat{\pi}(\tau-1))))}\right)
\end{equation*}
where 
\begin{equation}\label{sumvar}
S \leq \sum_{\tau=1}^{t}\sum_{i=1}^{n-a}\mathrm{Var}[M(\tau,i)|\filter{(\tau,i)-1}].
\end{equation}
Note that 
\begin{align}
\mathrm{Var}[M(\tau,i)|\filter{(\tau,i)-1}]&=\mathrm{Var}\left[\left.\frac{X(\tau,i)}{1-\pi(t)}\right| \filter{(\tau,i)-1}\right]\nonumber\\
&\leq\frac{1}{(1-\hat{\pi}(t))^2}\mathrm{Var}[X(\tau,i)| \filter{(\tau,i)-1}].
\end{align}

Recall that \begin{math}X(\tau-1,i)=1\end{math} implies \begin{math}X(\tau,i)=1\end{math} and that \begin{math}\tau>T\end{math} implies \begin{math}X(\tau,i)=X(\tau-1,i)\end{math}. 
In both of these cases we have
\begin{equation}
\mathrm{Var}[X(\tau,i)| \filter{(\tau,i)-1}]=0\stackrel{\hat{\pi}(t)\geq \hat{\pi}(t-1)}{\leq} \frac{\hat{\pi}(t)-\hat{\pi}(t-1)}{1-\hat{\pi}(t)}.
\end{equation}
Now assume \begin{math}\tau\leq T\end{math} and \begin{math}X(\tau-1,i)=0\end{math}. Since \begin{math}X(t,i)\end{math} is an indicator random variable, we have
\begin{align}
\mathrm{Var}[X(\tau,i)| \filter{(\tau,i)-1}]&\leq \mathbb{E}[X(\tau,i)| \filter{(\tau,i)-1}]=\frac{\hat{\pi}(\tau)-\hat{\pi}(\tau-1)}{1-\hat{\pi}(\tau-1)}\leq\frac{\hat{\pi}(\tau)-\hat{\pi}(\tau-1)}{1-\hat{\pi}(\tau)}.\label{exvar}
\end{align}

Putting \eqref{sumvar}-\eqref{exvar} together, we obtain
\begin{equation*}
S\leq \sum_{\tau=1}^t\sum_{i=1}^{n-a}\frac{\hat{\pi}(\tau)-\hat{\pi}(\tau-1)}{(1-\hat{\pi}(\tau))^3}\leq \sum_{\tau=1}^t \frac{n(\hat{\pi}(\tau)-\hat{\pi}(\tau-1))}{(1-\hat{\pi}(t))^3}\leq \frac{n\hat{\pi}(t)}{(1-\hat{\pi}(t))^3}.
\end{equation*}

\end{proof}

The previous lemma allows us to analyse the process in the first \begin{math}t_0\end{math} steps. This will be used in the proofs of Theorems \ref{mainsub} and \ref{mainsup}.

\section{Proof of Theorem \ref{mainsub}}\label{subcritical}
We want to investigate the number of infected vertices at time \begin{math}t_c\end{math}. By the definition of \begin{math}a_c\end{math} and \begin{math}t_c\end{math}, we have
\begin{equation}\label{critical}
a_c=-\min_{t\leq t_0}\frac{n\hat{\pi}(t)-t}{1-\hat{\pi}(t)}=\frac{t_c-n\hat{\pi}(t_c)}{1-\hat{\pi}(t_c)}.
\end{equation}
By the definition of \begin{math}M(t,i)\end{math}, we have
\begin{align*}
|\infec(t_c)|&\stackrel{\eqref{marinfec}}{=}a+(1-\pi(t_c))M(t_c,n-a)+(n-a)\pi(t_c).\\
\end{align*}
Since \begin{math}\pi(t)\leq \hat{\pi}(t)\end{math} and \begin{math}a=a_c-\omega_0\end{math}, we obtain
\begin{align}
|\infec(t_c)|&\leq a+M(t_c,n-a)+(n-a)\hat{\pi}(t_c)\nonumber\\
&=(a_c-\omega_0)(1-\hat{\pi}(t_c))+n\hat{\pi}(t_c)+M(t_c,n-a)\nonumber\\
&\stackrel{\eqref{critical}}{=}t_c-n\hat{\pi}(t_c)+n\hat{\pi}(t_c)-\omega_0(1-\hat{\pi}(t_c))+M(t_c,n-a)\nonumber\\
&\stackrel{\hat{\pi}(t_c)\leq \hat{\pi}(t_0)}{\leq} t_c-\omega_0(1-\hat{\pi}(t_0))+M(t_c,n-a).\label{numinfver}
\end{align}
Using \begin{math}np=\omega(1)\end{math} and \begin{math}t_0=(r!/(np^r))^{1/(r-1)}\end{math}, we have
\begin{equation}\label{t0small}
t_0p=O\left(\left(\frac{1}{np^r}\right)^{1/(r-1)}p\right)=O\left(\left(\frac{1}{np}\right)^{1/(r-1)}\right)=o(1).
\end{equation}
Furthermore,
\begin{align}
\hat{\pi}(t_0)&=\mathbb{P}[\mathrm{Bin}(t_0,p)\geq r]=\sum_{j=r}^{t_0}\binom{t_0}{j}p^{j}(1-p)^{t_0-j}\stackrel{\eqref{t0small}}{=}(1+o(1))\frac{t_0^r p^r}{r!}\nonumber\\
&=(1+o(1))\frac{t_0^{r-1}p^r}{r!}t_0=(1+o(1))\frac{t_0}{n}\stackrel{np=\omega(1)}{=}o\left(t_0p\right)\stackrel{\eqref{t0small}}{=}o(1).\label{probt0}
\end{align}
Applying Lemma \ref{conc} with \begin{math}\lambda=\omega_0/2\end{math}, we have that with probability at least
\begin{equation*}
1-\exp\left(-\frac{\omega_0^2}{8((1+o(1))n\hat{\pi}(t_c)+\omega_0/6)}\right)\geq 1-\exp\left(-\frac{\omega_0^2}{10 t_0}\right)
\end{equation*}
\begin{math}M(t_c,n-a)< \omega_0/2\end{math}.
This together with \eqref{numinfver} implies 
\begin{equation*}
|\infec(t_c)|\leq t_c-(1+o(1))\omega_0+\omega_0/2<t_c.
\end{equation*}
 Therefore \begin{math}T<t_c\end{math} and thus \begin{math}|\infec_f|=T< t_c\end{math}.

\section{Proof of Theorem \ref{mainsup}}

Before proving Theorem \ref{mainsup} we begin with an observation on \begin{math}A(t_0)\end{math}, the set of infected vertices after the first \begin{math}t_0\end{math} steps.

\begin{lemma}\label{early}
	Let \begin{math}\omega_0\end{math} be any function satisfying the conditions \begin{math}\omega_0=\omega(\sqrt{a_c})\end{math} and \begin{math}\omega_0\leq t_0-a_c\end{math}. \linebreak[4] If \begin{math}|\infec(0)|=a_c+\omega_0\end{math}, then with probability at least
\begin{equation*}
1-\exp\left(-\frac{\omega_0^2}{9.5 t_0}\right)
\end{equation*}
we have \begin{math}T>t_0\end{math} and \begin{math}|\infec(t_0)|\geq t_0+(1+o(1))a_c+\omega_0/2\end{math}.
\end{lemma}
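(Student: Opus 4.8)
The plan is to read off $|\infec(t)|$ from the martingale through \eqref{marinfec}, combine this with the extremal definition of $a_c$ as a minimum over $t\le t_0$, and invoke Lemma \ref{conc} to control how far $M$ can fall below $0$ in the first $t_0$ steps.

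First I would apply the first inequality of Lemma \ref{conc} with $\lambda=\omega_0/2$ and $t=t_0$, and call $\mathcal{E}$ the resulting event that $M(\tau,i)>-\omega_0/2$ for \emph{every} round $(0,n-a)\le(\tau,i)\le(t_0,n-a)$. Using \eqref{probt0} --- which gives $\hat{\pi}(t_0)=(1+o(1))t_0/n=o(1)$, hence $n\hat{\pi}(t_0)=(1+o(1))t_0$, $(1-\hat{\pi}(t_0))^3=1-o(1)$, and $t_0/n=o(1)$ --- together with the hypothesis $\omega_0\le t_0-a_c$ (so $\omega_0<t_0$), the exponent supplied by Lemma \ref{conc} has magnitude at least $\omega_0^2(1-o(1))/\bigl((28/3+o(1))t_0\bigr)$, which exceeds $\omega_0^2/(9.5\,t_0)$ for $n$ large because $28/3<9.5$. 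This gives $\mathbb{P}[\mathcal{E}]\ge 1-\exp(-\omega_0^2/(9.5\,t_0))$, a meaningful bound since $\omega_0=\omega(\sqrt{a_c})$ and $a_c=\Theta(t_0)$ make $\omega_0^2/t_0\to\infty$.

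Next I would show $\mathcal{E}$ forces both conclusions. For $T>t_0$, argue by contradiction: if $T\le t_0$, then since $\pi(\tau)=\hat{\pi}(\tau)$ for $\tau\le T$, identity \eqref{marinfec} at $\tau=T$ and the defining relation $|\infec(T)|=T$ pin down $M(T,n-a)=\tfrac{T-n\hat{\pi}(T)}{1-\hat{\pi}(T)}-a$; the extremal property of $a_c$ (giving $\tfrac{T-n\hat{\pi}(T)}{1-\hat{\pi}(T)}\le a_c$ since $T\le t_0$) then forces $M(T,n-a)\le a_c-a=-\omega_0$, which contradicts $\mathcal{E}$ at the round $(T,n-a)$. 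Hence $T>t_0$, so $\pi(t_0)=\hat{\pi}(t_0)$ and \eqref{marinfec} applies at $t_0$: $|\infec(t_0)|=a(1-\hat{\pi}(t_0))+n\hat{\pi}(t_0)+M(t_0,n-a)(1-\hat{\pi}(t_0))$. On $\mathcal{E}$ the last summand is $\ge-\omega_0/2$, and with $a=a_c+\omega_0$ this yields $|\infec(t_0)|\ge(a_c+\omega_0/2)(1-\hat{\pi}(t_0))+n\hat{\pi}(t_0)$. Substituting $n\hat{\pi}(t_0)=(1+o(1))t_0$ and $(a_c+\omega_0/2)\hat{\pi}(t_0)\le t_0\hat{\pi}(t_0)=o(t_0)$ (using $a_c+\omega_0\le t_0$ and $t_0/n=o(1)$), and absorbing the resulting $o(t_0)=o(a_c)$ errors into a $(1+o(1))a_c$ term --- legitimate because $a_c=\Theta(t_0)$ by $a_c=(1+o(1))(1-1/r)t_c$ and $t_0=(1+o(1))r^{1/(r-1)}t_c$ --- one obtains $|\infec(t_0)|\ge t_0+(1+o(1))a_c+\omega_0/2$.

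The step I expect to be the main obstacle is the circular-looking dependence: \eqref{marinfec} with $\pi=\hat{\pi}$ is valid only for $\tau\le T$, yet we want its consequence at step $t_0$ before knowing $T>t_0$. The fix is to evaluate it instead at the stopping time $T$ under the assumption $T\le t_0$, where the identity $|\infec(T)|=T$ makes $M(T,n-a)$ explicit and immediately contradicts $\mathcal{E}$; only afterwards do we return to step $t_0$. The other delicate point is purely quantitative --- squeezing the Chung--Lu tail of Lemma \ref{conc} below the constant $9.5$ --- and this works precisely because $n\hat{\pi}(t_0)=(1+o(1))t_0$ (making the relevant denominator $(28/3+o(1))t_0$) and $28/3<9.5$.
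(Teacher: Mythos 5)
Your proof is correct and follows essentially the same route as the paper: apply Lemma~\ref{conc} with $\lambda=\omega_0/2$, then combine \eqref{marinfec} with the extremal definition of $a_c$ to control $|\infec(t_0)|$, with the constant $9.5$ coming from the same numerical bookkeeping ($28/3<9.5$). The only structural difference is that you establish $T>t_0$ by evaluating the martingale at the stopping time $T$ under the assumption $T\le t_0$ and deriving a contradiction, whereas the paper runs a step-by-step induction showing $|\infec(t)|>t$ for each $t\le t_0$; both arguments are sound and rely on the same bound $a_c\ge (t-n\hat{\pi}(t))/(1-\hat{\pi}(t))$.
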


\begin{proof}
By the definition of \begin{math}a_c\end{math}, for every \begin{math}t\leq t_0\end{math} we have
\begin{equation}\label{acmin}
a_c\geq \frac{t-n\hat{\pi}(t)}{1-\hat{\pi}(t)}.
\end{equation}

Assume that \begin{math}M(t,i)> -\omega_0/2\end{math} for every \begin{math}t\leq t_0\end{math} and \begin{math}1\leq i \leq n-a\end{math}. First we will show by induction that if \begin{math}M(t,i)\end{math} satisfies this lower bound, then \begin{math}T>t_0\end{math}. Clearly \begin{math}T>0\end{math}. Now assume that for some \begin{math}t\leq t_0-1\end{math} we have that \begin{math}T>t-1\end{math}. Therefore \begin{math}\pi(t)=\hat{\pi}(t)\end{math}. For \begin{math}t\leq t_0\end{math} we have \begin{math}\hat{\pi}(t)\leq \hat{\pi}(t_0)\stackrel{\eqref{probt0}}{=}o(1)\end{math} and thus

\begin{align*}
|\infec(t)|&\stackrel{\eqref{marinfec}}{=}a+(1-\pi(t))M(t,n-a)+(n-a)\pi(t)\\
&\stackrel{M(t,i)> -\omega_0/2}{\geq} (1-\hat{\pi}(t))(a_c+\omega_0)+n\hat{\pi}(t)-(1+o(1))\omega_0/2\\
&\stackrel{\eqref{acmin}}{\geq} t+(1+o(1))\omega_0/2.
\end{align*}
Therefore \begin{math}|\infec(t)|>t\end{math} which together with \begin{math}T>t-1\end{math} implies \begin{math}T>t\end{math}.

Also note that
\begin{align}
|\infec(t_0)|&\stackrel{\eqref{marinfec}}{=}a+M(t_0,n-a)(1-\pi(t_0))+(n-a)\hat{\pi}(t_0)\nonumber\\
&\stackrel{\eqref{probt0}}{\geq}(1+o(1))a_c+(1+o(1))t_0+(1+o(1))\omega_0/2.\label{infect0}
\end{align}

Let \begin{math}t_1:=((r-1)!/np^r)^{1/(r-1)}\end{math}. Then \begin{math}t_1\leq t_0\end{math} and so
\begin{equation}\label{mincond}
a_c\geq \frac{t_1-n\hat{\pi}(t_1)}{1-\hat{\pi}(t_1)}.
\end{equation}
Also
\begin{align}
\hat{\pi}(t_1)&=\mathbb{P}[\mathrm{Bin}[t_1,p)\geq r]=\sum_{j=r}^{t_0}\binom{t_0}{j}p^{j}(1-p)^{t_1-j}\stackrel{\eqref{t0small}}{=}(1+o(1))\frac{t_1^r p^r}{r!}\nonumber\\
&=(1+o(1))\frac{t_1^{r-1}p^r}{r!}t_1=(1+o(1))\frac{t_1}{rn}.
\end{align}
From this and \eqref{mincond}
\begin{equation}
a_c\geq \frac{t_1-(1+o(1))t_1/r}{1-\hat{\pi}(t_1)}=(1+o(1))\left(1-\frac{1}{r}\right)t_1=\Omega(t_0),
\end{equation}
which together with \eqref{infect0} and \begin{math}\omega_0\leq t_0\end{math} implies
\begin{equation*}
|\infec(t_0)|=t_0+(1+o(1))a_c+\omega_0/2.
\end{equation*}
Lemma \ref{conc} with \begin{math}\lambda=\omega_0/2\end{math} implies the result.

\end{proof}

We will need to establish the size of the giant component in the set of vertices which have at least \begin{math}r-1\end{math} neighbours in \begin{math}\checked(t_0)\end{math}. For this we first need to determine the number of vertices which have at least \begin{math}r-1\end{math} neighbours in \begin{math}\checked(t_0)\end{math}.

\begin{lemma}\label{almostinfected}
Let \begin{math}A\subset [n]\end{math} with \begin{math}|A|=o(n)\end{math}. Conditional on \begin{math}T\geq t_0\end{math}, \begin{math}\infec(t_0)\end{math} and \begin{math}\checked(t_0)\end{math}, with probability \begin{math}1-\exp(-\Omega(p^{-1}))\end{math} we have that the number of vertices in \begin{math}[n]\backslash (\checked(t_0)\cup A)\end{math} with at least \begin{math}r-1\end{math} neighbours in \begin{math}\checked(t_0)\end{math} is at least \begin{math}3rp^{-1}/4\end{math}.
\end{lemma}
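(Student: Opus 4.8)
\emph{Proof plan.} The plan is to bound from below only the number of \emph{uninfected} vertices outside $\checked(t_0)\cup A$ that have exactly $r-1$ neighbours in $\checked(t_0)$; already this family has the right order of magnitude. Since we condition on $T\ge t_0$ we have $|\checked(t_0)|=t_0$, and every $v\notin\infec(t_0)$ has at most $r-1$ neighbours in $\checked(t_0)$, because $\checked(t_0)$ is the whole set examined during the first $t_0$ steps and $v$ did not become infected. Write $W:=[n]\setminus(\infec(t_0)\cup A)$. As $|\checked(t_0)|=t_0=o(n)$, $|A|=o(n)$, and $|\infec(t_0)|=o(n)$ in the range relevant here (it is $\Theta(t_0)$, cf.\ Lemma~\ref{early}), we get $|W|=(1-o(1))n$, and for $v\in W$ the events ``at least $r-1$ neighbours in $\checked(t_0)$'' and ``exactly $r-1$ neighbours in $\checked(t_0)$'' coincide.

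The heart of the argument is an independence statement for the conditional law. Fix $S,Z$ such that conditioning on $\{T\ge t_0,\ \infec(t_0)=S,\ \checked(t_0)=Z\}$ has positive probability, and for $v\in[n]\setminus\checked(t_0)$ let $N_v\subseteq\checked(t_0)$ denote the set of neighbours of $v$ in $\checked(t_0)$; unconditionally the $N_v$ are independent, each obtained by keeping every element of $\checked(t_0)$ with probability $p$. The key observation is that a vertex $v\notin\infec(t_0)$ is never examined and, given $|N_v|\le r-1$, is never infected, so it never influences the Scalia--Tomba process; hence the conditioning event can be written as $F\cap\bigcap_{v\in[n]\setminus\infec(t_0)}\{|N_v|\le r-1\}$, where $F$ is measurable with respect to the edges inside $\checked(t_0)$ and the edges from $\checked(t_0)$ to $\infec(t_0)\setminus\checked(t_0)$ — in particular $F$ involves no edge incident to $[n]\setminus\infec(t_0)$. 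Since the edges incident to $[n]\setminus\infec(t_0)$ are independent of the remaining ones, under the conditional law the sets $(N_v)_{v\in W}$ are i.i.d., each distributed as a $p$-random subset of $\checked(t_0)$ conditioned on having size at most $r-1$. In particular the indicators $\mathbf 1[|N_v|=r-1]$, $v\in W$, are i.i.d.\ $\mathrm{Bernoulli}(q)$ with $q:=\mathbb P[\mathrm{Bin}(t_0,p)=r-1\mid \mathrm{Bin}(t_0,p)\le r-1]\ge \mathbb P[\mathrm{Bin}(t_0,p)=r-1]$.

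It then remains to estimate $q$ and apply a Chernoff bound. Using $t_0p=o(1)$ from \eqref{t0small} together with $t_0^{\,r-1}p^{\,r-1}=t_0^{\,r-1}p^{\,r}/p=r!/(np)$,
\[
\mathbb P[\mathrm{Bin}(t_0,p)=r-1]=\binom{t_0}{r-1}p^{\,r-1}(1-p)^{t_0-r+1}=(1+o(1))\frac{t_0^{\,r-1}p^{\,r-1}}{(r-1)!}=(1+o(1))\frac{r}{np},
\]
so $q=(1+o(1))r/(np)$ and the number of $v\in W$ with $|N_v|=r-1$ has mean $|W|q=(1+o(1))rp^{-1}\ge \tfrac78 rp^{-1}$ for $n$ large. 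Since $p^{-1}\to\infty$, the lower tail in Theorem~\ref{chernoff} bounds the probability that this count drops below $3rp^{-1}/4$ by $\exp\!\big(-\Omega(p^{-2})/O(p^{-1})\big)=\exp(-\Omega(p^{-1}))$, which gives the claim. I expect the only delicate point to be the conditional‑independence statement of the second paragraph, i.e.\ making precise that for the uninfected vertices nothing about their edges into $\checked(t_0)$ has been exposed beyond the bound $|N_v|\le r-1$; once this is granted the remaining estimates are routine, and a matching upper bound of $(1+o(1))rp^{-1}$, if needed later, can be obtained along the same lines.
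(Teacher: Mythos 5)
Your proposal is correct and follows essentially the same route as the paper: identify that each vertex outside $\infec(t_0)$ has, under the conditional law, exactly $r-1$ neighbours in $\checked(t_0)$ with probability at least $\mathbb P[\mathrm{Bin}(t_0,p)=r-1]=(1+o(1))r/(np)$, observe independence across vertices, and finish with the Chernoff bound to get the $\exp(-\Omega(p^{-1}))$ tail. The one substantive difference is that you spell out the conditional-independence step — writing the conditioning event as $F\cap\bigcap_{v\notin\infec(t_0)}\{|N_v|\le r-1\}$ so that the $(N_v)_{v\in W}$ are i.i.d.\ truncated binomials under the conditional law — whereas the paper passes over this rather informally by bounding $\mathbb P[X_v=1\mid v\notin\infec(t_0)]$ below by the unconditional joint probability $\mathbb P[X_v=1,\,v\notin\infec(t_0)]$; your version is a tidier justification of the same inequality, and your restriction to $W=[n]\setminus(\infec(t_0)\cup A)$ only loses the vertices of $\infec(t_0)\setminus\checked(t_0)$ (which the paper counts for free), which is harmless for the lower bound.
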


\begin{proof}
Let \begin{math}X_v\end{math} be the indicator random variable that a vertex \begin{math}v\in [n] \backslash (\checked(t_0)\cup A)\end{math} has at least \begin{math}r-1\end{math} neighbours in \begin{math}\checked(t_0)\end{math} and set \begin{math}X:=\sum_{v\in [n] \backslash (\checked(t_0)\cup A)}X_v\end{math}.
Clearly
\begin{equation*}
\mathbb{P}[X_v=1|v\in \infec(t_0)]=1.
\end{equation*}
Note that if \begin{math}X_v=1\end{math} and \begin{math}v\not\in\infec(t_0)\end{math}, then \begin{math}v\end{math} has exactly \begin{math}r-1\end{math} neighbours in \begin{math}\checked(t_0)\end{math} and thus
\begin{align*}
\mathbb{P}[X_v=1|v\not\in\infec(t_0)]&\geq \mathbb{P}[X_v=1,v\not\in\infec(t_0)]=\binom{t_0}{r-1}p^{r-1}(1-p)^{t_0-r+1}\\
&\stackrel{\eqref{t0small}}{=}(1+o(1)) \frac{t_0^{r-1}}{(r-1)!}p^{r-1}=(1+o(1))\frac{r}{np}.
\end{align*}

Since the set of random variables \begin{math}\{X_v|v\in [n]\backslash (\checked(t_0) \cup A)\}\end{math} are mutually independent, \begin{math}X\end{math} stochastically dominates the binomial random variable \begin{math}\hat{X}=\mathrm{Bin}(n-t_0-|A|,(1+o(1))r/np)\end{math}.
Because \begin{math}t_0=o(n)\end{math} and \begin{math}|A|=o(n)\end{math}, we have
\begin{equation*}
\mathbb{E}[\hat{X}]=(1+o(1))rp^{-1}
\end{equation*}
and Theorem \ref{chernoff} implies
\begin{align*}
\mathbb{P}[X-\mathbb{E}(\hat{X})&\leq -(1+o(1))rp^{-1}/4]\leq \exp\left(-(1+o(1))\frac{r^2p^{-2}}{32rp^{-1}}\right)\leq \exp\left(-\frac{p^{-1}}{32}\right).
\end{align*}
\end{proof}

In the following two lemmas we look at the number of vertices with at least \begin{math}r\end{math} neighbours in a set of order \begin{math}p^{-1}\end{math} and set of order \begin{math}n\end{math}. Estimating the probability that a vertex has at least \begin{math}r\end{math} neighbours in such sets differs significantly  and are discussed separately.

\begin{lemma}\label{Chernoff1}
Let \begin{math}U,W\subset [n]\end{math} in \begin{math}G(n,p)\end{math} satisfy \begin{math}|U|= p^{-1}/2\end{math} and \begin{math}|W|=o(n)\end{math}. With probability at least \begin{math}1-\exp(-\Omega(n))\end{math} the number of vertices in \begin{math}[n]\backslash(U\cup W)\end{math} with at least \begin{math}r\end{math} neighbours in \begin{math}U\end{math} is at least \begin{math}n/(2^{r}r!\sqrt{\mathrm{e}})\end{math}.
\end{lemma}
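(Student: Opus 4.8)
The plan is to reduce the statement to a single binomial tail bound. For a vertex $v\in[n]\setminus(U\cup W)$, let $X_v$ be the indicator of the event that $v$ has at least $r$ neighbours in $U$. The number of neighbours of $v$ in $U$ is distributed as $\mathrm{Bin}(|U|,p)=\mathrm{Bin}(p^{-1}/2,p)$, so $q:=\mathbb{P}[X_v=1]=\mathbb{P}[\mathrm{Bin}(p^{-1}/2,p)\geq r]$ is the same for every such $v$. The key observation is that $X_v$ depends only on the edges joining $v$ to $U$, and for distinct $v,v'\in[n]\setminus(U\cup W)$ these edge sets are disjoint; hence the family $\{X_v\}$ is mutually independent and $X:=\sum_{v\in[n]\setminus(U\cup W)}X_v$ is \emph{exactly} a binomial random variable $\mathrm{Bin}(N,q)$ with $N:=n-|U|-|W|$. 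Since $p=\omega(n^{-1})$ forces $|U|=p^{-1}/2=o(n)$, and $|W|=o(n)$ by hypothesis, we have $N=(1-o(1))n$.

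Next I would lower bound $q$. Since $p\to 0$, the random variable $\mathrm{Bin}(p^{-1}/2,p)$ converges in distribution to $\mathrm{Poisson}(1/2)$, so
\[
q=(1+o(1))\,\mathbb{P}[\mathrm{Poisson}(1/2)\geq r]=(1+o(1))\,\mathrm{e}^{-1/2}\sum_{k\geq r}\frac{1}{2^{k}k!}.
\]
The series strictly exceeds its first term $1/(2^{r}r!)$, so there is a constant $c=c(r)>0$ with $q\geq (1+c)/(2^{r}r!\sqrt{\mathrm{e}})$ once $n$ is large. (Concretely one may keep only the $k=r$ and $k=r+1$ summands, expanding $\binom{p^{-1}/2}{j}p^{j}=\tfrac{1}{2^{j}j!}(1+O(p))$ and $(1-p)^{p^{-1}/2-j}=\mathrm{e}^{-1/2}(1+O(p))$; the $k=r+1$ term then supplies a $\Theta(1)$ surplus dominating the $O(p)$ corrections.) Combining with $N=(1-o(1))n$,
\[
\mathbb{E}[X]=Nq\geq (1-o(1))(1+c)\frac{n}{2^{r}r!\sqrt{\mathrm{e}}}\geq \Bigl(1+\tfrac{c}{2}\Bigr)\frac{n}{2^{r}r!\sqrt{\mathrm{e}}}
\]
for all large $n$; in particular $\mathbb{E}[X]=\Omega(n)$ and $\mathbb{E}[X]$ exceeds the target $n/(2^{r}r!\sqrt{\mathrm{e}})$ by a constant factor.

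Finally I would apply the lower Chernoff bound of Theorem~\ref{chernoff} to $X\sim\mathrm{Bin}(N,q)$ with $\lambda:=\mathbb{E}[X]-n/(2^{r}r!\sqrt{\mathrm{e}})=\Omega(n)$:
\[
\mathbb{P}\Bigl[X<\frac{n}{2^{r}r!\sqrt{\mathrm{e}}}\Bigr]\leq \mathbb{P}\bigl[X-\mathbb{E}[X]\leq -\lambda\bigr]\leq \exp\Bigl(-\frac{\lambda^{2}}{2\mathbb{E}[X]}\Bigr)=\exp(-\Omega(n)),
\]
which is exactly the asserted bound. The only mildly delicate point, and the one I would check most carefully, is the lower bound on $q$: one has to make sure the constant $2^{r}r!\sqrt{\mathrm{e}}$ falls on the correct side, i.e.\ that the mass of $\mathrm{Bin}(p^{-1}/2,p)$ at levels $r+1,r+2,\dots$ genuinely outweighs both the $o(1)$ loss from $N=(1-o(1))n$ and the $O(p)$ discrepancy between $\mathrm{Bin}(p^{-1}/2,p)$ and $\mathrm{Poisson}(1/2)$. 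Everything after that is a one-line Chernoff estimate, since $X$ is literally a binomial random variable with mean $\Omega(n)$.
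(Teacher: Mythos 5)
Your proposal is correct and takes essentially the same approach as the paper: the count $X$ of such vertices is a genuine binomial random variable since the indicators depend on disjoint edge sets, the per-vertex success probability converges to $\mathbb{P}[\mathrm{Poisson}(1/2)\geq r]>1/(2^{r}r!\sqrt{\mathrm{e}})$ by a constant margin, and a single lower Chernoff bound finishes. The only cosmetic difference is that the paper reaches the lower bound on the success probability by directly expanding $1-\mathbb{P}[\mathrm{Bin}(p^{-1}/2,p)<r]$ and labelling the resulting $\Omega(1)$ gap ``Clearly,'' whereas you make that step explicit by pointing to the Poisson mass at levels $k\geq r+1$.
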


\begin{proof}
Let \begin{math}Y_v\end{math} be the indicator random variable that a vertex \begin{math}v\in [n] \backslash (U \cup W)\end{math} has at least \begin{math}r\end{math} neighbours in \begin{math}U\end{math} and set \begin{math}Y=\sum_{v\in [n] \backslash (U \cup W)}Y_v\end{math}.
We have that 
\begin{align*}
\mathbb{P}[Y_v=1]
&=1-\sum_{j=0}^{r-1}\binom{p^{-1}/2}{j}p^{j}(1-p)^{p^{-1}/2-j}\\
&=1-(1-p)^{p^{-1}/2}\sum_{j=0}^{r-1}\binom{p^{-1}/2}{j}\left(\frac{p}{1-p}\right)^{j}\\
&=1-(1+o(1))\mathrm{e}^{-1/2} \sum_{j=0}^{r-1}\frac{(2p)^{-j}}{j!}p^{j}\\
&=1-(1+o(1))\mathrm{e}^{-1/2} \sum_{j=0}^{r-1}\frac{1}{j!2^{j}}.
\end{align*}

Clearly \begin{math}1\geq \mathbb{P}[Y_v=1] > 1/(2^{r}r!\sqrt{\mathrm{e}})\end{math}. Set \begin{math}\eta:=\mathbb{P}[Y_v=1]-1/(2^{r}r!\sqrt{\mathrm{e}})\end{math} and note that \linebreak[4] \begin{math}\eta=\Omega(1)\end{math}. 
Furthermore the set of random variables \begin{math}\{Y_v|v\in [n]\backslash (U \cup W)\}\end{math} are mutually independent and \begin{math}|[n]\backslash (U \cup W)|=(1+o(1))n\end{math}. Therefore, by Theorem \ref{chernoff} we have 
\begin{equation*}
\mathbb{P}[Y<n/(2^{r}r!\sqrt{\mathrm{e}})]\leq \exp\left(-(1+o(1))\frac{\eta^2n^2}{2n}\right)=\exp(-\Omega(n)).
\end{equation*}
\end{proof}

\begin{lemma}\label{Chernoff2}
Let \begin{math}U,W\subset [n]\end{math} in \begin{math}G(n,p)\end{math} satisfy \begin{math}|U|= n/(2^{r}r!\sqrt{\mathrm{e}})\end{math}. Then with probability \begin{math}\exp(-\Omega(p^{-1}))\end{math} all but at most \begin{math}p^{-1}\end{math} vertices in \begin{math}[n]\backslash (U \cup W)\end{math} have at least \begin{math}r\end{math} neighbours in \begin{math}U\end{math}.
\end{lemma}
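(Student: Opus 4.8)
The plan is to count those vertices of $[n]\setminus(U\cup W)$ that \emph{fail} to have $r$ neighbours in $U$, show that this count has expectation $o(p^{-1})$, and finish with the Chernoff bound of Theorem \ref{chernoff}. Throughout I treat $U$ and $W$ as fixed; in the intended application $U$ is the random set produced by Lemma \ref{Chernoff1}, but $U$ and $W$ are determined by edges disjoint from the edges between $[n]\setminus(U\cup W)$ and $U$, so one may first condition on $U,W$ and then run the argument below verbatim.

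First I would set up the count. For each $v\in[n]\setminus(U\cup W)$ let $X_v$ be the indicator of the event that $v$ has \emph{fewer than} $r$ neighbours in $U$. The number of neighbours of $v$ in $U$ has distribution $\mathrm{Bin}(|U|,p)$, so $\mathbb{P}[X_v=1]=q:=\mathbb{P}[\mathrm{Bin}(|U|,p)<r]$; since $v\notin U$, the indicator $X_v$ depends only on the (present-independently) edges between $v$ and $U$, and these edge sets are disjoint for distinct $v$, so the $X_v$ are mutually independent. Writing $B:=\sum_{v\in[n]\setminus(U\cup W)}X_v$ for the number of ``bad'' vertices, $B$ is stochastically dominated by $\mathrm{Bin}(n,q)$ and $\mathbb{E}[B]\le nq$.

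Next I would estimate $q$. Since $|U|=n/(2^{r}r!\sqrt{\mathrm{e}})$ and $np=\omega(1)$, we have $|U|p=\omega(1)$, so in $q=\sum_{j=0}^{r-1}\binom{|U|}{j}p^{j}(1-p)^{|U|-j}$ the $j=r-1$ term dominates; using $p=o(1)$ and $(1-p)^{|U|}=\exp(-(1+o(1))|U|p)$ this yields $q=O\bigl((|U|p)^{r-1}\exp(-(1+o(1))|U|p)\bigr)$. Hence, with $x:=np\to\infty$, one gets $npq=O\bigl(x^{r}\exp(-\Omega(x))\bigr)=o(1)$, i.e.\ $\mathbb{E}[B]\le nq=o(p^{-1})$; in particular $q\le 1/(2np)$ for $n$ large. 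Consequently $B$ is stochastically dominated by $\mathrm{Bin}(n,1/(2np))$, which has mean $p^{-1}/2$, and the upper tail of Theorem \ref{chernoff} with $\lambda=p^{-1}/2$ gives
\[
\mathbb{P}[B\ge p^{-1}]\le\mathbb{P}\bigl[\mathrm{Bin}(n,1/(2np))-p^{-1}/2\ge p^{-1}/2\bigr]\le\exp\!\left(-\frac{(p^{-1}/2)^{2}}{2\bigl(p^{-1}/2+p^{-1}/6\bigr)}\right)=\exp\!\left(-\frac{3}{16}p^{-1}\right),
\]
so with probability $1-\exp(-\Omega(p^{-1}))$ all but at most $p^{-1}$ vertices of $[n]\setminus(U\cup W)$ have at least $r$ neighbours in $U$.

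The only step requiring any care is the bound $\mathbb{E}[B]=o(p^{-1})$: it rests on the per-vertex failure probability $q$ being exponentially small in $|U|p=\Theta(np)$, so that even after multiplying by $n$ and by $p$ (to compare with $p^{-1}$) one is left with a polynomial in $np$ times $\exp(-\Omega(np))$, which vanishes because $np\to\infty$. Everything else is a single invocation of Theorem \ref{chernoff}, so I do not expect a genuine obstacle here.
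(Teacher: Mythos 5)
Your proof is correct and follows essentially the same route as the paper: both count the ``bad'' vertices with fewer than $r$ neighbours in $U$, bound the per-vertex failure probability by a polynomial in $np$ times $\exp(-\Omega(|U|p))$ to get $\mathbb{E}[B]=o(p^{-1})$, and conclude with the upper-tail Chernoff bound of Theorem~\ref{chernoff}. The only cosmetic difference is that you pass to the dominating $\mathrm{Bin}(n,1/(2np))$ to extract an explicit constant $3/16$, whereas the paper applies Theorem~\ref{chernoff} directly to $B$ with its $o(p^{-1})$ mean; the two are interchangeable.
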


\begin{proof}
Let \begin{math}B_v\end{math} be the indicator random variable that a vertex \begin{math}v\in [n] \backslash (U \cup W)\end{math} has less than \begin{math}r\end{math} neighbours in \begin{math}U\end{math} and set \begin{math}B:=\sum_{v\in [n] \backslash (U \cup W)}B_v\end{math}.
Since \begin{math}np=\omega(1)\end{math}, we have 
\begin{align*}
\mathbb{P}[B_v=1]&=\sum_{j=0}^{r-1}\binom{|U|}{j}p^{j}(1-p)^{|U|-j}\leq \exp(-|U|p) \sum_{j=0}^{r-1} \frac{(np)^{j}}{j!}\leq \exp(-|U|p) (np)^{r-1}.
\end{align*}

Since \begin{math}|[n]\backslash (U \cup W)|\leq n\end{math} and \begin{math}np=\omega(1)\end{math}, we have
\begin{equation*}
\mathbb{E}[B]\leq \exp(-|U|p) n(np)^{r-1}=\exp(-\Omega(np)) n^r p^{r-1}=o(p^{-1}).
\end{equation*}

Note that the set of random variables \begin{math}\{B_v|v\in [n]\backslash (U \cup W)\}\end{math} are mutually independent and therefore, by Theorem \ref{chernoff} we have 
\begin{equation*}
\mathbb{P}[B>p^{-1}]\leq \exp\left(-\Omega(p^{-1})\right).
\end{equation*}
\end{proof}

\begin{proof}[of Theorem \ref{mainsup}]
According to Lemma \ref{early} with probability at least \begin{math}1-\exp(-\omega_0^2/(9.5t_0))\end{math} we have that \begin{math}|\infec(t)|>t\end{math} for every \begin{math}t\leq t_0\end{math} and \begin{math}\infec(t_0)\geq t_0+(1+o(1))a_c+\omega_0/2\end{math}. Therefore the process runs for at least \begin{math}t_0\end{math} steps and there exists a set \begin{math}A\subseteq |\infec(t_0)\backslash\checked(t_0)|\end{math} of size \begin{math}a_c/2+\omega_0/2\end{math}. 

Lemma \ref{almostinfected} implies that conditional on \begin{math}\infec(t_0)\end{math} and \begin{math}\checked(t_0)\end{math} with probability at least \begin{math}1-\exp(-\Omega(p^{-1}))\end{math} there is a set of vertices in \begin{math}[n]\backslash (\checked(t_0)\cup A)\end{math} with size at least \begin{math}3rp^{-1}/4\end{math} where every vertex in the set has at least \begin{math}r-1\end{math} neighbours in \begin{math}\checked(t_0)\end{math} and select a subset \begin{math}W\end{math} of these vertices of size exactly \begin{math}3rp^{-1}/4\end{math}. Note that until this point every event depends only on edges with one end in \begin{math}\checked(t_0)\end{math}.

According to Theorem \ref{lingiant}, with probability \begin{math}1-\exp(-\Omega(p^{-1}))\end{math} there is a set \begin{math}U\subset W\end{math} such that the vertices in \begin{math}U\end{math} form a connected component and \begin{math}|U|\geq (1-\varepsilon)\rho p^{-1}\end{math} for arbitrary \begin{math}\varepsilon>0\end{math} independent of \begin{math}n\end{math}, where \begin{math}\rho\end{math} is the unique solution of \begin{math}1-\rho=\exp(-3\rho r/4)\end{math}. 
Since 
\begin{equation*}
1-\rho>\sum_{k=0}^4 \frac{(-3\rho r/4)^k}{k!}>\exp(-3\rho r/4)
\end{equation*}
when \begin{math}0<\rho \leq 1/2\end{math}, we have \begin{math}\rho>1/2\end{math} and thus we have that \begin{math}|U|\geq p^{-1}/2\end{math}. Also this event depends only on the edges with both endpoints in \begin{math}U\end{math} and thus it is independent of the previous events. 

Note that if a vertex in \begin{math}A\end{math} is connected to a vertex in \begin{math}U\end{math}, then every vertex in \begin{math}U\end{math} will become infected. 
The probability that no vertex in \begin{math}A\end{math} is connected to any vertex in \begin{math}U\end{math} is 
\begin{equation*}
(1-p)^{|U|(a_c+\omega)/2}\leq \exp(-(a_c+\omega)/4).
\end{equation*}
This event depends on edges between \begin{math}A\end{math} and \begin{math}U\end{math} and thus it is independent of the previous events.

Now take \begin{math}U'\subset U\end{math} such that \begin{math}|U'|=p^{-1}/2\end{math} and denote the set of vertices in \begin{math}[n]\backslash (W \cup A \cup \checked(t_0))\end{math} which have at least \begin{math}r\end{math} neighbours in \begin{math}U'\end{math} with \begin{math}B\end{math}.  Since \begin{math}|W \cup A \cup \checked(t_0)|=o(n)\end{math} by Lemma \ref{Chernoff1} with probability \begin{math}1-\exp(-\Omega(n))\end{math} we have that \begin{math}|B|\geq n/(2^r r!\sqrt{\mathrm{e}})\end{math}. Note that \begin{math}B\subset |\infec_f|\end{math}. This event depends only on edges between \begin{math}U'\end{math} and \begin{math}[n]\backslash (U\cup W \cup A \cup \checked(t_0))\end{math} and thus it is independent of the previous events. 

Finally let \begin{math}B'\subset B\end{math} with \begin{math}|B'|=n/(2^r r!\sqrt{\mathrm{e}})\end{math} and consider the set of vertices in \begin{math}[n]\backslash (B \cup U \cup \checked(t_0) \cup A)\end{math} which contain at least \begin{math}r\end{math} neighbours in \begin{math}B'\end{math}. Note that all of these vertices will be infected. By Lemma \ref{Chernoff2} we have with probability \begin{math}1-\exp(-\Omega(p^{-1}))\end{math} that all but at most \begin{math} p^{-1}\end{math} vertices in \begin{math}[n]\backslash (B \cup U \cup \checked(t_0) \cup A)\end{math} will become infected. Similarly as before this event depends only on edges which we haven't considered previously and thus it is independent of the previous events. Recall that \begin{math}B \cup U \cup \checked(t_0) \cup A \subset \infec_f\end{math} and thus \begin{math}|\infec_f|=(1+o(1))n\end{math}.

Since \begin{math}p^{-1}=\omega(t_0)\end{math} and \begin{math}n=\omega(p^{-1})\end{math}, 
we have that the probability that almost every vertex becomes infected is at least
\begin{equation*}
1-\exp\left(-\frac{\omega_0^2}{10t_0}\right)-\exp\left(-\frac{a_c+\omega_0}{4}\right).
\end{equation*}
\end{proof}

\bibliographystyle{alpha}
\bibliography{referencegnpexp_conf}

\end{document}